\newtheorem{thm}{Theorem}[section]
\newtheorem{prop}[thm]{Proposition}
\newtheorem{lemma}[thm]{Lemma}
\theoremstyle{definition}
\newtheorem{defn}[thm]{Definition}
\newtheorem{exmp}[thm]{Example}
\newtheorem{notn}[thm]{Notation}
\theoremstyle{remark}
\newtheorem{rem}[thm]{Remark}
\let\c@equation\c@thm
\numberwithin{equation}{section}
\newcommand{\Z}{\mathbb{Z}}
\newcommand{\N}{\mathbb{N}}
\newcommand{\bop}{\bigoplus}
\newcommand{\surj}{\twoheadrightarrow}
\title{Box Product of $C_p$-Mackey functors}
\author{Kaitlyn Loyd}
\date{\today}
\begin{document}

\begin{abstract}

Let $G$ be a finite group. In this paper, we provide an exposition of $G$-Mackey functors and a symmetric monoidal product on the category of Mackey functors called the box product. After computing several examples of box products for the case of $G=C_p$, we move to the heart of the paper, which is to find and classify all $C_p$-Mackey functors invertible for the box product.

\end{abstract}

\maketitle 

\tableofcontents

\section*{Introduction} 
The first notion of a $G$-Mackey functor was introduced by J.A. Green and later by Dress to provide a unified treatment of several constructions found in representation theory \cite{ADress}. As a simple example of this, consider a finite group $G$ acting linearly on a finite dimensional vector space $V$. In other words, $V$ is a finite dimensional representation of $G$. For any subgroup $H$ of $G$, we can consider $V$ as a representation of $H$ by restricting the action of $G$ to $H$. This is aptly named restriction. However, the more interesting question is whether it is also possible to go in the opposite direction. In other words, given a finite dimensional representation $V$ of $H$, can we get a representation of $G$? To do this, we use a construction called induction. Consider the space $$W = \bop_{g_iH} g_iV$$
where the direct sum is indexed over all cosets in $G/H$. The $g_i$ are chosen representatives of each of these cosets. Without going into too much detail, the $G$-action on $W$ is given by letting $G$ act on its cosets rather than $V$ itself, essentially permuting these isomorphic copies of $V$. We could also have defined this construction using the tensor product to avoid the condition that $G$ be finite. As we will see, these constructions are very similar to those that will be used in our definition of a $G$-Mackey functor.   

Mackey functors also arise rather naturally in algebraic topology as the stable homotopy "groups" of $G$-spectra, described nicely by Kristen Mazur \cite{Mazur}. In the case of ordinary stable homotopy theory, the stable homotopy groups of (non-equivariant) spectra land in $Ab$, the category of abelian groups. However, in equivariant stable homotopy theory, we consider $G$-spectra and require that the stable homotopy groups somehow encode information about the action of all subgroups of $G$ as well. Working out the details of this requirement, the stable homotopy groups of $G$-spectra are actually forced to be Mackey functors.
\begin{exmp}Let $S^0$ represent the equivariant sphere spectrum. Then the zeroth stable homotopy group of the $H$-fixed points of the equivariant sphere spectrum is $A(H)$ for all subgroups $H$ of $G$. Here $A(H)$ is the Burnside ring of $H$. More specifically,
$$\pi_0((S^0)^H) := [S^0 \wedge (G/H)_+,S^0]^G = [S^0, S^0]^H = \underline{A}(G/H) = A(H)$$
for all subgroups $H$ of $G$, where $G/H$ is a finite $G$-set and $[S^0,S^0]^H$ is the set of homotopy classes of $H$-equivariant maps $S^0 \rightarrow S^0$. As $H$ varies over all subgroups of $G$, we can fit the abelian groups $\underline{A}(G/H)$ together to obtain the Burnside Mackey functor $\underline{A}$ described in Example \ref{exmp: Burnside}.
\end{exmp} 

For those who are curious, the sources cited above give more thorough descriptions of these motivations coming from other areas of mathematics. However, for the context of this paper, these objects will be defined and discussed in a purely algebraic setting. 

We begin by providing a short discussion of Mackey functors along with several examples to develop some notation and intuition for these objects. We will also see how we can view Mackey functors as objects in a category, leading us into our main discussion on the box product, a symmetric monoidal product on the category of Mackey functors. Section \ref{sect: DefnBox} is devoted entirely to developing a concrete definition for the box product for the case of $G=C_p$ and provides a very constructive approach to this task. However, the downside to this approach is that it is notationally heavy. Thus, in Section \ref{sect: Computations}, we work through several computations of box products for $C_p$-Mackey functors to become more comfortable with the definition and how it works computationally. Perhaps the next natural question is what it means for an object to be invertible under this product and how many invertible objects there might be. The remainder of the paper is focused on answering this question. We develop several tools for doing so in Section \ref{sect: Invertible MF}, before concluding with a classification theorem for invertible $C_p$-Mackey functors. \\

\section{$G$-Mackey Functors} \label{sect: MF}
  
\subsection{Equivalent Definitions of a $G$-Mackey Functor} \label{subsect: Defns} Let $G$ be a finite abelian group. We first present two equivalent definitions of a $G$-Mackey functor (or just Mackey functor when the group $G$ is clear) for an arbitrary finite abelian group. In fact, the first definition, attributed to Dress \cite{ADress}, holds for any finite group $G$. After stating these definitions, we will quickly present a definition specialized to $G=C_p$, the cyclic group of order a prime $p$, as these groups will be the main focus of this paper.  

\begin{defn} \label{defn: Dress MF} A \textit{$G$-Mackey functor \underline{M}} consists of a pair of functors $M_*$ and $M^*$ from the category of finite $G$-sets, $Set^G$, to the category of abelian groups, $Ab$, that agree on objects (we can thus define $\underline{M}(X):= M^*(X) = M_*(X)$ for any $X \in Set^G$) and take disjoint unions to direct sums. $M_*$ is covariant and $M^*$ is contravariant, and for every pullback diagram in $Set^G$: 

\[\xymatrix{ A \ar[r]^\alpha \ar[d]_\beta & B \ar[d]^\gamma \\
C \ar[r]_\delta & D
}
\]
the following diagram commutes.
\[\xymatrix{ \underline{M}(A) \ar[r]^{M_*(\alpha)}  & \underline{M}(B) \\
\underline{M}(C) \ar[u]^{M^*(\beta)} \ar[r]_{M_*(\delta)} & \underline{M}(D)\ar[u]_{M^*(\gamma)}
}
\] \\
\noindent The contravariant functor will be referred to as \textit{restriction} and the covariant functor as \textit{transfer}.  
\end{defn}

Alternatively, recall that any finite $G$-set is a disjoint union of orbits $G/H$, for $H$ a subgroup of $G$. Then by additivity, to define a Mackey functor, it suffices to determine $\underline{M}(G/H)$ as $H$ varies over the subgroups of $G$ as well as the restriction and transfer maps between each of these abelian groups. We then have the following more constructive definition of a Mackey functor. 

\begin{defn} \label{defn: GMF}Let $G$ be a finite abelian group. A \textit{Mackey functor} $\underline{M}$ is a collection of abelian groups $\underline{M}(G/H)$, as $H$ ranges over the subgroups of $G$, each accompanied by maps $tr_K^H: \underline{M}(G/K) \rightarrow \underline{M}(G/H)$ and $res_K^H : \underline{M}(G/H) \rightarrow \underline{M}(G/K)$ for all subgroups $K$ of $H$ such that:

\begin{enumerate}
\item $tr_J^H = tr_K^H tr_J^K$ and $res_J^H = res_J^K res_K^H$ for all subgroups $J \subseteq K \subseteq H$

\item $tr_K^H(\gamma \cdot x) = tr_K^H(x)$ for all $x \in \underline{M}(G/K)$ and $\gamma \in W_H(K)$. 

\item $\gamma \cdot res_K^H(x) = res_K^H(x)$ for all $x \in \underline{M}(G/H)$ and $\gamma \in W_H(K)$

\item For all subgroups $J, K \subset H$, $res_K^H tr_K^J(x) = \sum_{\gamma \in W_H(K)} \gamma \cdot tr_{J \cap K}^K(x)$ for all $x \in \underline{M}(G/(J \cap K))$. 
\end{enumerate}
where $W_H(K)$ is the Weyl group, $N_H(K)/K$. Notice that for $G$ abelian, $W_H(K) = H/K$ so the Weyl action is induced by the automorphisms of $H/K$. 
\end{defn}

\noindent An explanation on the equivalence between these two definitions is given by both Th\'evenaz and Webb \cite{TW} and Mazur \cite{Mazur}. 

\subsection{$C_p$-Mackey Functors} \label{subsect: C_p MF} 
When $G = C_p$, $p$ prime, $G$ has only two subgroups, $\{e\}$ and $C_p$. Then much of this definition becomes superfluous and we can reduce it to the following.

\begin{defn} \label{defn: CpMF} Let $G = C_p$. A \textit{Mackey functor} $\underline{M}$ is the pair of abelian groups $\underline{M}(*) := \underline{M}(C_p/C_p)$ and $\underline{M}(C_p) := \underline{M}(C_p/e)$ accompanied by maps $tr_e^{C_p}: \underline{M}(C_p) \rightarrow \underline{M}(*)$ and $res_e^{C_p} : \underline{M}(*) \rightarrow \underline{M}(C_p)$ such that for all $x \in \underline{M}(*), y \in \underline{M}(C_p), \gamma \in C_p$,
\begin{enumerate}
\item $\gamma \cdot res_e^{C_p}(x) = res_e^{C_p}(x)$
\item $tr_e^{C_p}(\gamma \cdot y) = tr_e^{C_p}(y)$ 
\item $res_e^{C_p}tr_e^{C_p}(y) = \sum_{\gamma \in C_p} \gamma \cdot y$
\end{enumerate}
The group $\underline{M}(C_p)$ is equipped with an action of $C_p$, whereas $\underline{M}(*)$ has an action of $\{e\}$, which is ignored. 
\end{defn}

\begin{notn}When there is no ambiguity, we abbreviate $res_K^H$ and $tr_K^H$ as $res$ (or just $r$) and $tr$, respectively.
\end{notn}

\begin{rem}In Definition \ref{defn: GMF}, we require knowledge of the Weyl group and its action. In all relevant cases for this paper, we have $G=C_p$ and the Weyl group is either trivial or $C_p$ itself. Hence, explicit mention of the Weyl group is often not made, as in the above definition.
\end{rem}

A concise way to capture the information of Definition \ref{defn: CpMF} is with a Lewis diagram, first introduced by Gaunce Lewis \cite{GLewis}. For $G = C_p$, we can describe any $C_p$-Mackey functor by the following diagram.
\[\xymatrix{
\underline{M}(*) \ar@/_1pc/[d]_{r} \\
\underline{M}( C_p ) \ar@/_1pc/[u]_{tr} }\] 
The simplicity of this diagram relies heavily upon $C_p$ having only two subgroups. A more general treatment is given by Mazur \cite{Mazur}, although we will not need this.  

We can also consider the category of Mackey functors, denoted $\mathfrak{M}_G$. The objects are Mackey functors and the morphisms are given as follows. 

\begin{defn} \label{def: Morphisms}Let $\underline{M}, \underline{N}$ be $G$-Mackey functors for $G$ a finite abelian group. A \textit{morphism of Mackey functors} $\phi: \underline{M} \rightarrow \underline{N}$ is a collection of $W_G(H)$-equivariant group homomorphisms $\phi_H: \underline{M}(G/H) \rightarrow \underline{N}(G/H)$ for all subgroups of $H$ such that $res_K^H\phi_H = \phi_K res_K^H$ and $tr_K^H \phi_K = \phi_H tr_K^H$ for all subgroups $K$ of $H$. For $G=C_p$, we can describe $\phi$ by the following.
\[\xymatrixcolsep{5pc}\xymatrix{
\underline{M}(*) \ar[r]^{\phi_{C_p}} \ar@/_1pc/[d]_{r_M} & \underline{N}(*) \ar@/_1pc/[d]_{r_N} \\
\underline{M}(C_p) \ar[r]^{\phi_e} \ar@/_1pc/[u]_{tr_M} & \underline{N}(C_p) \ar@/_1pc/[u]_{tr_N}}\] \\
The morphism $\phi$ is an \textit{isomorphism of Mackey functors} if $\phi_H$ is a group isomorphism for all $H \subseteq G$. 
\end{defn} 

\subsection{Common Examples} \label{subsect: Common Examples}
Although we will describe all of the following examples more generally, for the sake of simplicity all Lewis diagrams will be presented only for the group $G=C_p$.

\begin{exmp}Fixed Point Mackey Functor \\ 
Let $M$ be a module over the group ring $\Z[G]$. For all subgroups $H$ of $G$, define $$\underline{M}(G/H) := M^H$$ 
where $M^H$ is the subgroup of $M$ fixed by $H$. Then for all subgroups $K$ of $H$, the restriction map $res_K^H: M^H \rightarrow M^K$ is given by inclusion of fixed points and the transfer map $tr_K^H: M^K \rightarrow M^H$ is given by $tr_K^H(x) = \sum_{\gamma \in W_H(K)} \gamma \cdot x$. The Lewis diagram is given below for $G=C_p$. 

\[\xymatrix{
x \ar@{|->}[d] & M^{C_p} \ar@/_1pc/[d]_{r} & \sum_{\gamma \in C_p} \gamma \cdot x\\
x & M \ar@/_1pc/[u]_{tr} & x \ar@{|->}[u] }\]

This is the only example for which we will check the properties of a Mackey functor. Let $K \subseteq H \subseteq G$. Let $x \in M^H$. Since $res_K^H(x)$ is the image of an $H$-fixed point under the inclusion map, it is still fixed by $W_H(K)=H/K$ in $M^K$, showing the first property. For the second property,

$$tr_K^H(h \cdot y) = \sum_{\gamma \in H/K} \gamma \cdot (h \cdot y) = \sum_{\gamma \in H/K} (\gamma h) \cdot y = \sum_{\gamma \in H/K} \gamma \cdot y = tr_K^H(y)$$\\
for all $h \in H/K$. Lastly, for any $x \in M^K$, we have

$$res_K^Htr_K^H(x) = res_K^H \Big(\sum_{\gamma \in H/K} \gamma \cdot x \Big) = \sum_{\gamma \in H/K} \gamma \cdot x$$
\end{exmp}

\begin{exmp}\label{exmp: Constant}Constant Mackey Functor, $\underline{\Z}$

The constant Mackey functor can be viewed as a special case of the fixed point Mackey functor above. Consider $\Z$ as a $\Z[G]$-module with trivial $G$-action. Then $\underline{\Z}(G/H) := \Z^H = \Z$ for all subgroups $H$ of $G$. All restriction maps must be given by the identity and for all subgroups $K$ of $H$, 

$$tr_K^H(x) = \sum_{\gamma \in H/K} \gamma \cdot x = \sum_{\gamma \in H/K} x = |H/K| x$$
We have the following Lewis diagram.

\[\xymatrix{
\Z \ar@/_1pc/[d]_{1} \\
\Z \ar@/_1pc/[u]_{p} }\]
\end{exmp}

\begin{exmp}Orbit Mackey Functor, $\hat{\underline{M}}$

The orbit Mackey functor is defined in a way dual to the fixed point Mackey functor. Let $M$ be a $\Z[G]$-module. For all subgroups $H \subseteq G$, define 

$$\hat{\underline{M}}(G/H) := M_H$$
where $M_H$ is the quotient of $M$ by the action of $H$. Then for all subgroups $K \subseteq H \subseteq G$ and for all $x \in \hat{\underline{M}}(G/H)$, the restriction maps are given by $res_K^H(x) = \sum_{\gamma \in W_K(H)} \gamma \cdot x$ and the transfer maps by the surjection $M_K \surj M_H$.
 
\[\xymatrix{
M_{C_p} \ar@/_1pc/[d]_{p} \\
M \ar@/_1pc/[u]_{1} }\]
\end{exmp}

\begin{exmp}Permutation Mackey Functor 

Let $S$ be a finite $G$-set. Denote by $\Z[S]$ the free abelian group generated by $S$. We can view $\Z[S]$ as a $\Z[G]$-module by the action of $G$ on $S$. Then the permutation Mackey functor is equivalent to the fixed point Mackey functor for $\Z[S]$. 

We determine this explicitly for $G = S = C_2$. Recall $\Z[C_2] = \{a + b \gamma : a,b \in \Z\}$, where $\gamma$ is the nontrivial element of $C_2$. $G$ acts on $\Z[C_2]$ by $\gamma \cdot (a+b \gamma) = a \gamma + b$, so that the fixed points are exactly those elements of the form $a+a\gamma$, $ a \in \Z$. Then the permutation Mackey functor is given by the following:
\[\xymatrixcolsep{.6pc}\xymatrix{
1+\gamma \ar@{|->}[d] & \Z \langle 1+\gamma \rangle \ar@/_1pc/[d]_{r} & 1+ \gamma & 1+ \gamma \\
1+ \gamma  & \Z[C_2] \ar@/_1pc/[u]_{tr} & 1 \ar@{|->}[u] & \gamma \ar@{|->}[u]}\]
\end{exmp}

\begin{exmp} \label{exmp: Burnside} Burnside Mackey Functor, $\underline{A}$

For reasons that will be further explained in Section \ref{subsect: Burnside}, the Burnside Mackey functor is of great importance. To understand this Mackey functor, we must first give the following definition, attributed to none other than Burnside \cite{Burn}.

\begin{defn}The \textit{Burnside ring of a group $G$}, $A(G)$, is the Grothendieck group of the abelian monoid of isomorphism classes of finite $G$ sets. Addition for the monoid is given by disjoint union and the ring structure is given by multiplication under Cartesian product. 
\end{defn}

We can now define $\underline{A}$ on objects by $\underline{A}(G/H):= A(H)$, the Burnside ring of $H$. Let $Set^H$ be the category of all finite $H$-sets and let

$$i_K^*: Set^H \rightarrow Set^K , \quad H \times_K (-): Set^K \rightarrow Set^H$$ \\
denote the forgetful functor and induction functor, respectively. This induction functor can be thought of as taking $|H/K|$ copies of the given $H$-set and letting $H$ act on these copies by permuting the indices. Recall that $H$ acts on any of its cosets by $h' \cdot hK = (h'h)K$ for $h, h' \in H$. Then define:

$$r_K^H([X]) := [i_K^*(X)], \quad tr_K^H([Y]) := [G \times_H Y]$$ \\
for all $[X] \in \underline{A}(G/H)$ and $[Y] \in \underline{A}(G/K)$. 

We shall compute this Mackey functor explicitly for $G= C_p$. For $H = \{e\}$, the isomorphism classes of finite $e$-sets are just finite sets. The Grothendieck group is generated by the isomorphism class of a single point $[e]$, denoted as 1, so that $A(H) = \Z\langle 1 \rangle.$ Now suppose $H=C_p$. There are now two isomorphism classes, those with trivial action, generated by $[e]$, and those with an action of $C_p$, generated by the set $[C_p]$. Then we write $A(H) = \Z \langle 1, [C_p] \rangle $. The restriction map is determined by forgetting the action of $C_p$ on a $C_p$ set. Essentially, this counts the number of points in the set, so that $1 \mapsto 1$ and $[C_p] \mapsto p$. The transfer map takes $p$ copies of the singleton $[e]$ and gives a $C_p$ action by permuting the copies. Then $1 \mapsto [C_p]$. We have the following Lewis diagram: 
 
\[\xymatrixcolsep{.6pc}\xymatrix{
1 \ar@{|->}[d] & [C_p] \ar@{|->}[d] & \Z \langle 1, [C_p] \rangle \ar@/_1pc/[d]_{r} & [C_p] \\
1 & p & \Z \langle 1 \rangle \ar@/_1pc/[u]_{tr} & 1 \ar@{|->}[u] }\]\\
\indent We can similarly define what Lewis called the \textit{twisted Burnside Mackey functor} $_d\underline{A}$ by letting $1 \in \underline{A}(*)$ restrict to any $d \in \Z$. These will become especially important in our discussion of invertible Mackey functors. 
\end{exmp}

\section{Definition of the Box Product} \label{sect: DefnBox}
\noindent The category of $G$-Mackey functors $\mathfrak{M}_G$ has a symmetric monoidal structure given by the box product,
$$\Box : \mathfrak{M}_G \times \mathfrak{M}_G \rightarrow \mathfrak{M}_G$$
As we will show in Section \ref{subsect: Burnside}, the box product for $C_2$-Mackey functors has unit given by the Burnside Mackey functor. Although the box product can be defined much more simply categorically in terms of left Kan extensions, this definition lacks the constructive nature of its algebraic counterpart and hence does not provide much information needed for computations. As our goal in the following sections is to compute several examples of the box product, we shall develop only the algebraic definition, originally detailed by Gaunce Lewis \cite{GLewis}.

\textbf{For the remainder of this paper, let} $\mathbf{G= C_p}$.  Let $\underline{M}$ and $\underline{N} \in \mathfrak{M}_G$. Using Lewis diagrams, we can represent the box product as below.
\[\xymatrix{
(\underline{M} \, \Box \, \underline{N}) (*)\ar@/_1pc/[d]_{res_e^{C_p}} \\
(\underline{M} \, \Box \, \underline{N}) (C_p) \ar@/_1pc/[u]_{tr_e^{C_p}} }\] \\
As the box product is an analog to the tensor product in the category of Mackey functors, an initial attempt to describe the box product might yield the following definition: 

$$(\underline{M} \, \Box \, \underline{N}) (G/H) \overset{?}{=} \underline{M}(G/H) \otimes \underline{N}(G/H)$$\\
Recall that it suffices to define a Mackey functor only on these orbits. As the restriction map is supported by this definition, the above is sufficient for $G/H = C_p/e$. However,  the transfer map is not and thus we must modify this definition for $(\underline{M} \, \Box \, \underline{N}) (*)$. A natural solution would be to rather artificially introduce all transfers, and this is what we do, yielding 

\begin{equation}(\underline{M}(*) \otimes \underline{N}(*)) \oplus Im(tr)
\end{equation}\\
where $Im(tr) = (\underline{M}(C_p) \otimes \underline{N}(C_p))/_{C_p}$. We quotient by the action of $C_p$ here to force $(\underline{M}\, \Box\, \underline{N})(*)$ to satisfy the following property of a Mackey functor: $tr(\gamma (a \otimes b)) = tr(a \otimes b)$ for all $a \in \underline{M}(C_p), b \in \underline{N}(C_p),\gamma \in C_p $. Here, $C_p$ acts diagonally, i.e. $\gamma (a \otimes b) = \gamma a \otimes \gamma b $. Additionally, since the elements of (2.1) are direct sums of tensor products, we expect them to act as such. In particular, we want that they satisfy something like the following:

$$x \otimes \sum_{\gamma \in C_p} \gamma \cdot y \overset{?}{=} \sum_{\gamma \in C_p} \gamma (x \otimes y)$$\\
However, in the context of Mackey functors, this summation now has a meaning (it is the transfer map). Consider for now the fixed point Mackey functor or any such that $r(x) = x$. Then for $x \in \underline{M}(*)$, $y \in \underline{M}(C_p)$, 

\begin{align*}
tr(r(x) \otimes y) 
= \sum_{\gamma \in C_p} \gamma(r(x) \otimes y) 
= \sum_{\gamma \in C_p} \gamma r(x) \otimes \gamma y 
= x \otimes \sum_{\gamma \in C_p} \gamma y 
= x \otimes tr(y)
\end{align*}\\
This gives the relation $x \otimes tr(y) \sim tr(r(x) \otimes y)$ for all $x \in \underline{M}(*), y \in \underline{M}(C_p)$. We can do this similarly for $(\sum_{\gamma \in C_p} \gamma \cdot x) \otimes y$. By requiring all Mackey functors to satisfy this desired property, we have the following relations, called \textit{Frobenius reciprocity}. 

$$x \otimes tr(y) \sim tr(r(x) \otimes y) , \quad tr(x) \otimes y \sim tr(x \otimes r(y))$$\\
We then arrive at the final definition of the box product for $G= C_p$.

\begin{defn} Let $\underline{M}$ and $\underline{N} \in \mathfrak{M}_{C_p}$. Define
$$(\underline{M} \, \Box \, \underline{N}) (*) :=  ((\underline{M}(*) \otimes \underline{N} (*)) \oplus (\underline{M}(C_p) \otimes \underline{N}(C_p))/_{C_p})/_\sim$$
$$(\underline{M} \, \Box \, \underline{N}) (C_p) := \underline{M}(C_p) \otimes \underline{N}(C_p)$$
The transfer map $tr_e^{C_p}: (\underline{M} \, \Box \, \underline{N}) (C_p)  \rightarrow (\underline{M} \, \Box \, \underline{N}) (*)$  is subject to the relation $tr(\gamma \cdot x ) = tr(x)$ for all $x \in (\underline{M} \, \Box \, \underline{N}) (C_p)$. The $C_p$ action is given by $\gamma \cdot (a \otimes b) = \gamma a \otimes \gamma b $ for $\gamma \in C_p$ and the relations $\sim$ are given by:
\begin{align*}
x \otimes tr(y) &\sim tr(r(x) \otimes y)\\
tr(x) \otimes  y &\sim tr(x \otimes r(y))
\end{align*}
We define the restriction map $res_e^{C_p}:(\underline{M} \, \Box \, \underline{N}) (*)  \rightarrow (\underline{M} \, \Box \, \underline{N}) (C_p)$ by $res_e^{C_p}(x \otimes y) = r_{\underline{M}}(x) \otimes r_{\underline{N}}(y)$ and for all $tr_e^{C_p}(x) \in Im(tr_e^{C_p})$, 
$$res_e^{C_p}tr_e^{C_p}(x) = \sum_{\gamma \in C_p} \gamma \cdot x$$
Additionally, one can easily check bilinearity over scalars in $\Z$ for both transfer and restriction. 
\end{defn}
The above definition can be described by the following Lewis diagram:
\[\xymatrix{
( (\underline{M}(*) \, \otimes \, \underline{N} (*)) \oplus Im(tr))/ _{\sim} \ar@/_1pc/[d]_{res_e^{C_p}} \\
\underline{M}(C_p) \, \otimes \, \underline{N}(C_p) \ar@/_1pc/[u]_{tr_e^{C_p}} }\]

\section{Computations of Box Products} \label{sect: Computations}

\subsection{Involving The Constant Mackey Functor}

\begin{exmp}
We begin with a preliminary example, $\underline{\Z} \, \Box \, \underline{\Z}$, where $\underline{\Z}$ denotes the constant Mackey functor. Recall the copies of $\underline{\Z}$ can be represented by the following diagrams:

\[\xymatrix{
x \ar@{|->}[d]  &  \Z\langle x \rangle \ar@/_1pc/[d]_{r_1} & px & &  x' \ar@{|->}[d] & \Z\langle x' \rangle \ar@/_1pc/[d]_{r_2} & px' \\
y & \Z\langle y \rangle \ar@/_1pc/[u]_{tr_1} & y \ar@{|->}[u] & & y' & \Z\langle y' \rangle \ar@/_1pc/[u]_{tr_2} & y \ar@{|->}[u]}\] \\
Then $(\underline{\Z} \, \Box \, \underline{\Z}) (*) = \Z \langle x \otimes x' , \, tr(y \otimes y' ) \rangle /\sim$ and $(\underline{\Z} \, \Box \, \underline{\Z}) (C_p) = \Z \langle y \otimes y' \rangle$. As they stand, the generators coming from transfer are nothing more than symbolic filler. The objective now is to write the transfers in terms of the first set of generators using the relations we can obtain from Frobenius reciprocity. In general, there are several generators and this may be rather difficult. However, in this case, it is clear.

\begin{equation}
tr(y \otimes y') = tr(r_1(x) \otimes y')\sim x \otimes tr_2(y') = x \otimes px' = p(x \otimes x')
\end{equation} \\
Then the generator $tr(y \otimes y')$ was redundant and we can drop it from our generating set. Note that since there is trivial $C_p$ action on both $\Z\langle y \rangle$ and $\Z\langle y' \rangle$, then $\Z \langle y \otimes y' \rangle $ has trivial $C_p$ action as well. We are left with
\[\xymatrix{
\Z \langle x \otimes x' \rangle \ar@/_1pc/[d]_{r} \\
\Z \langle y \otimes y' \rangle \ar@/_1pc/[u]_{tr} }\] \\
Directly from the definition, we have $r(x \otimes x') = r_1(x) \otimes r_2(x') = y \otimes y'$. From (3.2), we immediately obtain $tr(y \otimes y') = p(x \otimes x')$. By the isomorphism, $x \mapsto x \otimes x'$, $y \mapsto y \otimes y'$, we now recognize the above as the constant Mackey functor, so that $\underline{\Z} \, \Box \, \underline{\Z} \simeq \underline{\Z}$.
\end{exmp}

\subsection{Involving The Burnside Mackey Functor} \label{subsect: Burnside}
We now aim to show that the unit for the box product is the Burnside Mackey functor $\underline{A}$. We begin with a specific example before showing the general case. First, recall the Lewis diagram for $\underline{A}$.

\[\xymatrixcolsep{.6pc}\xymatrix{
1 \ar@{|->}[d] & [C_p] \ar@{|->}[d] & \Z \langle 1, [C_p] \rangle \ar@/_1pc/[d]_{r} & [C_p] \\
1 & p & \Z \langle 1 \rangle \ar@/_1pc/[u]_{tr} & 1 \ar@{|->}[u] }\]

\begin{exmp} $\underline{A} \, \Box \, \underline{\Z}$

We shall use the above notation for $\underline{A}$ and represent $\underline{\Z}$ as in Example \ref{exmp: Constant}.
 
$$(\underline{A} \, \Box \, \underline{\Z}) (*) = \Z \langle 1 \otimes x , \, [C_p] \otimes x, \, tr(1 \otimes y) \rangle /\sim$$ 
$$(\underline{A} \, \Box \, \underline{\Z}) (C_p) = \Z \langle 1 \otimes y \rangle$$\\
Since the $C_p$ action is trivial, this gives no relations. We now determine those given by Frobenius reciprocity:

$$tr(1 \otimes y) = tr(r(1) \otimes y) \sim 1 \otimes tr(y) = 1 \otimes px = p(1 \otimes x)$$
$$
[C_p] \otimes x = tr(1) \otimes x \sim tr(1 \otimes r(x)) = tr(1 \otimes y)$$\\
Combining these, both $[C_p] \otimes x $ and $tr(1 \otimes y)$ may be eliminated from the generating set. This yields the Lewis diagram
\[\xymatrix{
\Z \langle 1 \otimes x \rangle \ar@/_1pc/[d]_{r} \\
\Z \langle 1 \otimes y \rangle \ar@/_1pc/[u]_{tr} }\] 
We can compute: $r(1 \otimes x) = r(1) \otimes r(x) = 1 \otimes y$ and $tr(1 \otimes y) = p(1 \otimes x)$. Then, again this is recognized to be the constant Mackey functor so that $\underline{A} \, \Box \, \underline{\Z} \simeq \underline{\Z}$ .
\end{exmp}

\begin{exmp}$\underline{A} \, \Box \, \underline{M}$\\
Let $\underline{M} \in \mathfrak{M}_G$. A generic $C_p$-Mackey functor has the following Lewis diagram. For simplicity, we will let the abelian groups be finitely generated. 

\[\xymatrix{
\underline{M}(*) = \Z \langle a_1, \dots, a_n \rangle \ar@/_1pc/[d]_{r_{\underline{M}}} \\
\underline{M}( C_p ) = \Z \langle b_1, \dots, b_m \rangle \ar@/_1pc/[u]_{tr_{\underline{M}}} }\] 

$$(\underline{A} \, \Box \, \underline{M}) (*) = \Z \langle 1 \otimes a_1 , \dots, 1 \otimes a_n, \, [C_p] \otimes a_1, \dots, [C_p] \otimes a_n, \, tr(1 \otimes b_1), \dots, tr(1 \otimes b_m) \rangle /\sim$$ 
$$(\underline{A} \, \Box \, \underline{M}) (C_p) = \Z \langle 1 \otimes b_1, \dots, 1 \otimes b_m \rangle$$

We have that the group action is given by $\gamma ( 1 \otimes b_i ) = \gamma 1 \otimes \gamma b_i = 1 \otimes \gamma b_i$ and so is completely determined by the action on $\underline{M}(C_p)$.

$$[C_p] \otimes a_i = tr(1) \otimes a_i \sim tr(1 \otimes r(a_i)) = tr \Big(1 \otimes \sum_{j=1}^m c_{ij} b_j \Big) = \sum_{j=1}^m c_{ij} tr(1 \otimes b_j)$$

$$tr(1 \otimes b_i) = tr(r(1) \otimes b_i) \sim 1 \otimes tr_{\underline{M}}(b_i) = 1 \otimes  \sum_{j=1}^n d_{ij} a_j = \sum_{j=1}^n d_{ij} (1 \otimes a_j) $$\\
for some $c_{ij}, d_{ij} \in  \Z$. Then all generators of the form $[C_p] \otimes a_i$ and $ tr(1 \otimes b_i)$ may be eliminated from the generating set and we have the following Lewis diagram.
\[\xymatrix{
1 \otimes a_i\ar@{|->}[d] & \Z \langle 1 \otimes a_1, \dots, 1 \otimes a_n \rangle \ar@/_1pc/[d]_{r} & 1 \otimes tr_{\underline{M}}(b_j) \\
1 \otimes r_{\underline{M}}(a_i) & \underline{M}( C_p ) = \Z \langle 1 \otimes b_1, \dots, 1 \otimes b_m \rangle \ar@/_1pc/[u]_{tr} & 1 \otimes b_j \ar@{|->}[u]}\] 
Then the restriction and transfer maps are both determined by the original restriction and transfer for $\underline{M}$ so that $\underline{A} \, \Box \, \underline{M} \simeq \underline{M}$ by the obvious isomorphism $1 \otimes a_i \mapsto a_i$, $1 \otimes b_i \mapsto b_i$.
 
\end{exmp}

\begin{exmp} \label{exmp: Twisted Burnside} Let $c,d \in \Z$. Then $_c\underline{A} \, \Box \, _d \underline{A} \simeq {_{cd} \underline{A}}$, by the following computation. Using the notation of Example \ref{exmp: Burnside} for the twisted Burnside Mackey functors, we have the following.

\[\xymatrixrowsep{.5pc}\xymatrix{& \Z \langle 1 \otimes 1, 1 \otimes [C_p], [C_p] \otimes 1, [C_p]\otimes [C_p], tr(1 \otimes 1) \rangle \ar@/_1pc/[dd]_{r} \\ 
_c\underline{A} \, \Box \, _d \underline{A} = &\\
& \Z \langle 1 \otimes 1 \rangle \ar@/_1pc/[uu]_{tr} }\]
We have the following relations. 
$$c \cdot tr(1 \otimes 1) = 1 \otimes [C_p]$$
$$p \cdot tr(1 \otimes 1) = [C_p] \otimes [C_p]$$
$$d \cdot tr(1 \otimes 1) = [C_p] \otimes 1$$
Then  $(_c\underline{A} \, \Box \, _d \underline{A})(*) = \Z \langle 1 \otimes 1, tr(1 \otimes 1) \rangle$. We then compute $r(1 \otimes 1) = c \otimes d = cd(1 \otimes 1)$ and $r(tr(1 \otimes 1)) = p(1 \otimes 1)$, so that the above Mackey functor diagram indeed represents $_{cd}\underline{A}$. This computation will be of great importance when determining invertible Mackey functors in Section \ref{sect: Invertible MF}. 
\end{exmp}

\subsection{Other Computations}
\begin{exmp} $\underline{\Z}[C_2] \, \Box \, \underline{\Z}[C_2] \simeq \underline{\Z}[C_2 \times C_2]$\\
We have the following set-up for this box product.

\[\xymatrixcolsep{.5pc}\xymatrix{
1+\gamma \ar@{|->}[d]  &  \Z\langle 1+\gamma \rangle \ar@/_1pc/[d]_{r_1} & 1+\gamma & 1+\gamma 
& & &  e + \alpha \ar@{|->}[d] & \Z\langle e+\alpha \rangle \ar@/_1pc/[d]_{r_2} & e+\alpha & e + \alpha \\
1+\gamma & \Z[C_2] \ar@/_1pc/[u]_{tr_1} & 1 \ar@{|->}[u] & \gamma \ar@{|->}[u] 
& & & e+\alpha & \Z[C_2] \ar@/_1pc/[u]_{tr_2} & e \ar@{|->}[u] & \alpha \ar@{|->}[u]}\]

$$(\underline{\Z}[C_2] \, \Box \, \underline{\Z}[C_2]) (*) = \Z \langle (1+\gamma) \otimes (e + \alpha), tr(1 \otimes e), tr(1 \otimes \alpha), tr(\gamma \otimes e), tr(\gamma \otimes \alpha) \rangle $$
$$(\underline{\Z}[C_2] \, \Box \, \underline{\Z}[C_2]) (C_2) = \Z \langle 1 \otimes e, 1 \otimes \alpha, \gamma \otimes e, \gamma \otimes \alpha \rangle $$
Notice that there is now an action of $C_2$ on $(\underline{\Z}[C_2] \, \Box \, \underline{\Z}[C_2]) (C_2)$ so that we have the following relations.
$$tr(1 \otimes e) = tr(\gamma (1 \otimes e)) = tr(\gamma \otimes \alpha)$$
$$tr(1 \otimes \alpha) = tr(\gamma (1 \otimes \alpha)) = tr(\gamma \otimes e)$$
Additionally, we have the following relation from Frobenius reciprocity.
$$(1+\gamma) \otimes (e+\alpha) = (1+\gamma) \otimes tr(e) = tr(r(1 + \gamma) \otimes e) = tr(1 \otimes e) + tr(\gamma \otimes e)$$
Then $(\underline{\Z}[C_2] \, \Box \, \underline{\Z}[C_2]) (*)$ reduces to $\Z \langle tr(1 \otimes e), tr(1 \otimes \alpha) \rangle $. Now observe that the restriction map is injective so that we can identify $tr(1 \otimes e)$ with its value under restriction. Recall that for this, we have the formula
$$r(tr(1 \otimes e)) = \sum_{\gamma \in C_2} (1 \otimes e) = 1 \otimes e + \gamma \otimes \alpha$$
Hence, $tr(1 \otimes e) = 1 \otimes e + \gamma \otimes \alpha$. We can perform the same computation to $1 \otimes \alpha$ to yield $tr(1 \otimes \alpha) =  1 \otimes \alpha + \gamma \otimes e$. Let $A = 1 \otimes e + \gamma \otimes \alpha$ and $B = 1 \otimes \alpha + \gamma \otimes e$. We now have the following Lewis diagram for $\underline{\Z}[C_2] \, \Box \, \underline{\Z}[C_2]$. 
\[\xymatrixcolsep{.6pc}\xymatrix{
A \ar@{|->}[d] & B \ar@{|->}[d] & \Z \langle A,B \rangle \ar@/_1pc/[d]_{r} & A & A & B & B \\
A & B & \Z\langle 1 \otimes e, 1 \otimes \alpha, \gamma \otimes e, \gamma \otimes \alpha \rangle  \ar@/_1pc/[u]_{tr} & 1\otimes e \ar@{|->}[u]  & \gamma \otimes \alpha \ar@{|->}[u] & 1\otimes \alpha \ar@{|->}[u] & \gamma \otimes e \ar@{|->}[u]}\]
Notice that the elements involving $A$ and those involving $B$ don't interact at all. This looks like ``two $\underline{\Z}[C_2]$ Mackey functors in one". And in some sense it is. It is worthwhile for the reader to convince themselves that this is in fact isomorphic to the permutation Mackey functor $\underline{\Z}[C_2 \times C_2]$. This generalizes to the following, which we shall not prove here. 
\end{exmp}

\begin{exmp}Let $X, Y$ be finite $C_p$-sets. Then $\underline{\Z}[X] \, \Box \, \underline{\Z}[Y] \simeq \underline{\Z}[X \times Y]$. 
\end{exmp}

\section{Invertible Mackey Functors} \label{sect: Invertible MF}
Here we provide the first nontrivial examples of Mackey functors invertible for the box product. These will be the main focus of the rest of the paper. For $G= C_p$, we show that these are precisely those Mackey functors $_d \underline{A}$ with $d \in \Z$ relatively prime to $p$. 

\begin{defn}Let $\underline{M} \in \mathfrak{M}_G$. We say that $\underline{M}$ is \textit{invertible for the box product} (or just invertible) if there exists an $\underline{N} \in \mathfrak{M}_G$ such that $\underline{M} \, \Box \, \, \underline{N}$ is isomorphic to the Burnside Mackey functor.  
\end{defn}

\subsection{Initial Examples}We begin by providing a proof of a result by Shulman \cite{Shulman}. 

\begin{lemma} \label{lemma: Twisted Iso} There is an isomorphism of twisted Mackey functors $_{c}\underline{A}$ and  ${_d \underline{A}} $ if and only if there is an $x \in \Z$ such that $c = \pm d+px$. 
\end{lemma}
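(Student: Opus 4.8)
The plan is to unwind Definition \ref{def: Morphisms} for the particularly rigid Lewis diagrams of $_c\underline{A}$ and $_d\underline{A}$, reduce the existence of an isomorphism to a statement about invertible $2 \times 2$ integer matrices, and then read off the arithmetic condition. First I would record the explicit diagram of $_d\underline{A}$ (from Examples \ref{exmp: Burnside} and \ref{exmp: Twisted Burnside}): as an abelian group $_d\underline{A}(*) = \Z\langle u_d, v_d\rangle$ is free of rank two with $u_d$ the class of a point and $v_d = tr(w_d)$ the class of $[C_p]$, while $_d\underline{A}(C_p) = \Z\langle w_d\rangle$ carries the trivial $C_p$-action, and the structure maps are $tr(w_d) = v_d$, $res(u_d) = d\,w_d$, and $res(v_d) = p\,w_d$ (this last equality holding for every $d$, by property (3) of Definition \ref{defn: CpMF}).

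Next I would analyze an arbitrary morphism $\phi \colon {_c\underline{A}} \to {_d\underline{A}}$. On the bottom, $\phi_e(w_c) = k\,w_d$ for some $k \in \Z$ (both groups are rank one, and $C_p$-equivariance is vacuous here). Compatibility of $\phi$ with transfer forces $\phi_{C_p}(v_c) = \phi_{C_p}(tr(w_c)) = tr(\phi_e(w_c)) = k\,v_d$; since the top is free on $\{u_d, v_d\}$ we may write $\phi_{C_p}(u_c) = a\,u_d + b\,v_d$, so in the basis $(u,v)$ the map $\phi_{C_p}$ is the lower-triangular matrix $\left(\begin{smallmatrix} a & 0 \\ b & k \end{smallmatrix}\right)$. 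Compatibility with restriction is automatic on $v_c$ and on $u_c$ amounts to the single equation $ad + bp = kc$. Hence $\phi$ is an isomorphism precisely when $\det\left(\begin{smallmatrix} a & 0 \\ b & k \end{smallmatrix}\right) = ak = \pm 1$, which in particular forces $k = \pm 1$, so that $\phi_e$ is automatically bijective as well.

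From here both directions are short. For necessity, multiply $ad + bp = kc$ by $k$ to obtain $c = (ak)d + (kb)p = \pm d + px$ with $x := kb \in \Z$. For sufficiency, given $c = \varepsilon d + px$ with $\varepsilon \in \{\pm 1\}$ and $x \in \Z$, take $k = 1$, $a = \varepsilon$, $b = x$; the resulting $\phi$ has $\phi_e = \mathrm{id}$ and $\phi_{C_p}$ given by a matrix of determinant $\varepsilon$, hence both components are isomorphisms of abelian groups, the relation $ad + bp = c = kc$ is satisfied, and the componentwise inverses automatically assemble into a morphism inverse to $\phi$.

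The one place I would be careful is the bookkeeping that guarantees no hidden constraint survives: I want to confirm that transfer-compatibility is exactly what pins down $\phi_{C_p}$ on $v_c$, that restriction-compatibility contributes only the Diophantine equation $ad + bp = kc$ (and nothing extra on $v_c$), and that Weyl-equivariance on the bottom group is genuinely vacuous because the $C_p$-action there is trivial. Once this is verified the equivalence with ``$c = \pm d + px$ for some $x \in \Z$'' is immediate; that bookkeeping is really the only subtle point, with everything else being linear algebra over $\Z$ together with one elementary number-theoretic manipulation.
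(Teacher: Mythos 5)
Your proposal is correct and follows essentially the same route as the paper's proof: unwind the definition of a morphism of Mackey functors on the explicit Lewis diagrams, observe that transfer-compatibility pins down the image of $[C_p]$, that restriction-compatibility yields the single Diophantine equation, and that invertibility is the unit-determinant condition on the resulting triangular integer matrix. The only (harmless) difference is that you carry the general bottom map $k = \pm 1$ through the computation, whereas the paper normalizes $\phi_e = \mathrm{id}$ at the outset.
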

\begin{proof}Suppose  $_{c}\underline{A} \simeq {_d \underline{A}}$. Then there are isomorphisms of abelian groups $\phi_{C_p}: \Z\langle 1, [C_p] \rangle  \rightarrow \Z\langle 1, [C_p] \rangle$ and $\phi_e: \Z \rightarrow \Z$. As the only group isomorphisms $\Z \rightarrow \Z$ are the identity and $1 \mapsto -1$, we can assume without loss of generality that $\phi_e = id$.  To determine $\phi_{C_p}$, we utilize the remaining properties of a morphism of Mackey functors. 
$$\phi_{C_p}([C_p]) = \phi_{C_p}(tr(1)) = tr(\phi_e(1)) = tr(1) = [C_p]$$
$$c = \phi_e(c) = \phi_e(r(1)) = r(\phi_{C_p}(1)) = r(y+x[C_p]) = dy + px$$
for some $x,y \in \Z$. Representing $\phi_{C_p}$ by a matrix $A$, these results imply
$$A = \begin{pmatrix}
y & x\\
0 & 1
\end{pmatrix}$$
Since $\phi_{C_p}$ is an isomorphism, $det(A) = y$ must be invertible in $\Z$. Then $y = \pm 1$ so that $c = \pm d +px$. \\ 
Now, suppose $c = \pm d +px$ and consider the following diagram. \[\xymatrixcolsep{5pc}\xymatrix{
\Z \langle 1, [C_p] \rangle \ar[r]^{{\pm 1 \, \, x}\choose{\,\,0 \,\,\,\,\, 1}} \ar@/_1pc/[d]_{{c}\choose{p}} & \Z \langle 1, [C_p] \rangle \ar@/_1pc/[d]_{{d}\choose{p}} \\
\Z \langle 1 \rangle  \ar@{=}[r] \ar@/_1pc/[u]_{(0 \,\, 1)} & \Z \langle 1 \rangle \ar@/_1pc/[u]_{(0 \,\, 1)}}\]  
We can routinely check that this is indeed a morphism of Mackey functors. To determine if it is an isomorphism, we first notice that the bottom is the identity and hence an isomorphism. Since the determinant of the matrix for $\phi_{C_p}$ is a unit in $\Z$, the top group homomorphism is also an isomorphism, as desired. \end{proof}

\begin{notn}Let $a,b \in \Z$. We write $(a,b)$ to denote the greatest common divisor of $a$ and $b$. 
\end{notn}

\begin{lemma} \label{lemma: Invertible Twisted} Let $_c\underline{A}$ be a twisted Burnside Mackey functor. Then there exists a $d \in \Z$ such that $_c\underline{A} \, \Box \, _d\underline{A}$ is isomorphic to $\underline{A}$ if and only if $(c,p)=1$.
\end{lemma}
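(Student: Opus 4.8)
The plan is to deduce the lemma from the two results immediately preceding it, reducing it to an elementary statement about divisibility modulo $p$. First I would invoke Example \ref{exmp: Twisted Burnside}, which gives $_c\underline{A} \,\Box\, _d\underline{A} \simeq {_{cd}\underline{A}}$ for all $c,d \in \Z$; hence the existence of a $d$ with $_c\underline{A} \,\Box\, _d\underline{A} \simeq \underline{A}$ is equivalent to the existence of a $d$ with $_{cd}\underline{A} \simeq {_1\underline{A}}$. Applying Lemma \ref{lemma: Twisted Iso} (with its $c$ taken to be $cd$ and its $d$ taken to be $1$) translates this in turn into the condition that there exist $d, x \in \Z$ with $cd = \pm 1 + px$, i.e. that $cd \equiv \pm 1 \pmod{p}$ for some $d$.

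For the ``if'' direction, assuming $(c,p) = 1$, I would use that $p$ is prime, so $\Z/p$ is a field and $c$ is a unit modulo $p$; choosing $d$ with $cd \equiv 1 \pmod p$ writes $cd = 1 + px$, and the chain of equivalences above yields $_c\underline{A} \,\Box\, _d\underline{A} \simeq \underline{A}$. For the ``only if'' direction, from $cd \equiv \pm 1 \pmod p$ I would note that $cd$ is nonzero modulo $p$ (since $p \ge 2$), so $p \nmid cd$, hence $p \nmid c$, which is precisely $(c,p) = 1$.

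There is essentially no hard step here once Example \ref{exmp: Twisted Burnside} and Lemma \ref{lemma: Twisted Iso} are available; the only points requiring care are that the box-product computation of Example \ref{exmp: Twisted Burnside} is valid for arbitrary integers (in particular for a $d$ with $(d,p) \neq 1$), and that the sign ambiguity $\pm 1$ inherited from Lemma \ref{lemma: Twisted Iso} is harmless, since both $1$ and $-1$ are units modulo $p$ and both are nonzero modulo any prime.
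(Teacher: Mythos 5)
Your proposal is correct and follows essentially the same route as the paper: reduce via Example \ref{exmp: Twisted Burnside} to $_{cd}\underline{A} \simeq \underline{A}$, translate via Lemma \ref{lemma: Twisted Iso} into the congruence $cd \equiv \pm 1 \pmod p$, and finish with the elementary observation that such a $d$ exists precisely when $p \nmid c$. The paper phrases the last step via Bezout and primality of $p$ rather than invertibility in $\Z/p$, but this is only a cosmetic difference.
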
 
\begin{proof}
Recall from Example \ref{exmp: Twisted Burnside}, $_c\underline{A} \, \Box \, _d \underline{A} \simeq {_{cd}\underline{A}}$. By Lemma \ref{lemma: Twisted Iso}, $\underline{A} \simeq {_{cd}\underline{A}}$ if and only if there is an $x \in \Z$ such that $1 = \pm cd + px$, where $p$ and $c$ are both known. We know by the Bezout identity that this has integer solutions $d, x$ if and only if $p$ is relatively prime to $c$. However, we will quickly show this. The reverse direction follows from the extended Euclidean algorithm. Now suppose $(p,\pm c) \neq 1$. In this case, $p$ is prime so that we must have $c=pk$ for some $k \in \Z$. Then $1 = p(kd + x)$, a contradiction. 
\end{proof}

Put slightly differently, this tells us all twisted Burnside Mackey functors $_c\underline{A}$ with $(c,p) = 1$ are invertible, with inverse of the form $_d\underline{A}$ for some $d \in \Z$. Our goal now is to show that these are all invertible Mackey functors. However, first we will spend a bit of time determining all invertible $C_p$-modules, the reason for which will become immediately apparent in the following section.

\subsection{Invertible $C_p$-Modules}

\begin{defn}A left $C_p$-module is an abelian group $M$ with a left action of $C_p$ such that $\gamma(a+b) = \gamma(a) + \gamma(b)$ for all $\gamma \in C_p$ and $a,b \in M$. 
\end{defn} 

Notice that since $M$ is abelian, a right $C_p$-module is given by letting $a \cdot \gamma = \gamma^{-1} \cdot a$ for all $\gamma \in C_p$ and $a \in M$. Hence, we will simply refer to $C_p$-modules, rather than distinguishing between right and left. Then a $C_p$-module is a $\Z$-module (viewing $\Z$ as a ring) with a group action that is compatible with the module structure. Additionally, we can define a tensor product of $C_p$-modules. Let $M,N$ be $C_p$-modules. Forgetting the $C_p$ action, recall that $M$ and $N$ are $\Z$-modules. We obtain a new $\Z$-module by $M \otimes_\Z N$. To determine a $C_p$ action on $M \otimes_\Z N$ that is compatible with the $\Z$-module structure, we again do the most obvious thing by letting $C_p$ act diagonally. A $C_p$-module $M$ is called \textit{invertible} if there is a $C_p$-module $N$ such that $M \otimes N \simeq \Z$, where $\Z$ is given the trivial action. 

\begin{lemma} \label{lemma: Invertible AbGrp}Let $p$ be a prime integer. \\
(a) For $p$ odd, there is exactly one invertible $C_p$-module up to isomorphism. \\
(b) For $p=2$, there are exactly two invertible $C_p$-modules up to isomorphism. 
\end{lemma}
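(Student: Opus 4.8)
The plan is to classify invertible $C_p$-modules by reducing to a rank and torsion analysis. First I would observe that if $M \otimes_{\Z} N \cong \Z$, then forgetting the $C_p$-action gives $M \otimes_{\Z} N \cong \Z$ as abelian groups, which forces $M$ and $N$ to be free of rank $1$ over $\Z$ (any torsion would survive in the tensor product, and the ranks must multiply to $1$). So $M \cong \Z$ as an abelian group, and the only data is the $C_p$-action: a homomorphism $C_p \to \mathrm{Aut}(\Z) = \{\pm 1\}$. When $p$ is odd, the only such homomorphism is trivial, since $C_p$ has no subgroup of index $2$; this already shows there is at most one invertible $C_p$-module, namely $\Z$ with trivial action, and it visibly is invertible (it is its own inverse). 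This proves (a).

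For $p = 2$, there are two homomorphisms $C_2 \to \{\pm 1\}$: the trivial one, giving $\Z$ with trivial action, and the sign action, giving the module I will call $\Z_{-}$, where the generator $\gamma$ acts by $x \mapsto -x$. I then need to check that $\Z_{-}$ is invertible: the diagonal action on $\Z_{-} \otimes_{\Z} \Z_{-}$ sends $x \otimes y$ to $(-x) \otimes (-y) = x \otimes y$, so $\Z_{-} \otimes \Z_{-} \cong \Z$ with trivial action, i.e. $\Z_{-}$ is its own inverse. Finally I would note that $\Z$ and $\Z_{-}$ are not isomorphic as $C_2$-modules, since an isomorphism would have to intertwine the actions but one action is trivial and the other is not. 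Hence for $p = 2$ there are exactly two, proving (b).

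I do not expect a serious obstacle here; the one point requiring a little care is the claim that an invertible module must be free of rank $1$ as an abelian group. The cleanest way to pin this down is: tensoring with $\Q$ gives $(M \otimes \Q) \otimes_{\Q} (N \otimes \Q) \cong \Q$, so $M \otimes \Q$ is one-dimensional and $M$ has rank $1$; and if $M$ had a nonzero torsion element $t$ of order $\ell$, then $t \otimes n$ would be a nonzero $\ell$-torsion element of $M \otimes N \cong \Z$ for suitable $n$ (using that $N$ is nonzero), a contradiction. With $M \cong \Z$ in hand, the rest is the elementary computation of $\mathrm{Hom}(C_p, \{\pm 1\})$ and the two explicit verifications above.
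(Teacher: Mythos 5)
Your overall route is the same as the paper's: reduce to showing that the underlying abelian group of an invertible $C_p$-module is $\Z$, then observe that the possible actions are the homomorphisms $C_p \to \mathrm{Aut}(\Z) = \{\pm 1\}$, of which there is only the trivial one for $p$ odd and two for $p=2$, and check that $\Z$ and $\Z_-$ are each self-inverse under the diagonal action. That half of your argument, including the observation that $\Z$ and $\Z_-$ are not isomorphic as $C_2$-modules, is correct and matches the paper.

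The gap is in the reduction to $M \cong \Z$ as an abelian group. The claim that ``any torsion would survive in the tensor product'' is false in general: $\Z/2 \otimes_\Z \Z[1/2] = 0$, so a torsion element $t$ of order $\ell$ need not yield a nonzero $t \otimes n$ when $N$ is $\ell$-divisible, and the parenthetical ``using that $N$ is nonzero'' does not justify the step. Moreover, even granting rank one and torsion-freeness, you cannot conclude ``free of rank one'': $\Z[1/2]$ and $\Q$ are rank-one torsion-free groups that are not free. The missing ingredient is finite generation. The paper deals with this by explicitly invoking, without proof, the standard fact that an invertible $\Z$-module is finitely generated, after which the structure theorem for finitely generated abelian groups gives $M \cong \Z$ by exactly the rank-and-torsion bookkeeping you have in mind. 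If you either cite that fact or prove it directly (write the preimage of $1$ under $M \otimes N \cong \Z$ as a finite sum $\sum_i m_i \otimes n_i$ and show the $m_i$ generate $M$ by tensoring the inclusion of the subgroup they generate with $N$ and then with $M$), your argument closes up and the rest goes through as written.
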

\begin{proof}(a)We first show that the only invertible $\Z$-module is $\Z$ itself and then consider the group action. Let $A \in Ab$ and suppose there is a $B \in Ab$ such that $A \otimes_\Z B \simeq \Z$. It is a fact we shall not prove here that any invertible $\Z$-module is finitely generated. Then by the classification theorem of finitely generated abelian groups, $A \simeq \Z^n \oplus (\bop_i \Z/{n_i}\Z)$ and $B \simeq \Z^m \oplus (\bop_j \Z/{m_j}\Z)$. Then we have:
\begin{align*}A \otimes_\Z B &\simeq (\Z^n \oplus (\bop_i \Z/{n_i}\Z) ) \otimes_\Z (\Z^m \oplus (\bop_j \Z/{m_j}\Z) )\\
&= \Z^{nm} \oplus \bop_j (\Z^n \otimes_\Z \Z/m_j\Z) \oplus \bop_i (\Z^m \otimes_\Z \Z/n_i\Z) \oplus \bop_{ij} (\Z/n_i\Z \otimes_\Z \Z/m_jZ)\
\end{align*}
Since $A \otimes_\Z B \simeq \Z$, the free summand must have $nm=1$, where $n,m \in \Z$ so that $n=m=1$. Then this reduces to the following. 

$$\Z \simeq \Z \oplus (\bop \Z/m_j\Z) \oplus (\bop \Z/n_i\Z) \oplus (\bop_{ij} \Z/(n_i,m_j)\Z) $$
Then evidently all $n_i = m_j = 0$ so that $A \simeq \Z \simeq B$.


We must now determine the $C_p$-action on $\Z$. There is a one to one correspondence between $C_p$-modules $M$ and group homomorphisms $C_p \rightarrow Aut(M)$, where $M$ is viewed as an abelian group and $Aut(M)$ denotes the set of group automorphisms on $M$. Letting $M = \Z$, recall that the set $Aut(\Z)$ has only two elements, the identity 1 and the automorphism determined by $1 \mapsto -1$, which we denote here as -1. Since $p$ is odd, we have only the trivial homomorphism sending all $\gamma \in C_p$ to 1. This corresponds to the $C_p$-module $\Z$ with trivial $C_p$-action, which is clearly invertible with inverse itself. Since this is the only possible $C_p$-module structure, this is all invertible $C_p$-modules for $p$ odd. \\
(b)When $p=2$, there is an additional non-trivial group homomorphism given by $1 \mapsto -1$, corresponding to the $C_p$-module $\Z_-$. Recall that this is $\Z$ equipped with the sign action. Notice that $\Z_-$ is also invertible with inverse itself. Since any group homomorphism is determined by where it sends $[1]$, this in fact exhausts all possible homomorphisms. Hence, there are exactly two invertible $C_2$ modules given by $\Z$ and $\Z_-$.   
\end{proof}

\subsection{Classification of Invertible Mackey Functors} Let $\underline{M} \in \mathfrak{M}_G$ be invertible. Then there is some $\underline{N} \in \mathfrak{M}_G$ such that the following is an isomorphism of Mackey functors. 

\[\xymatrix{
( \underline{M}(*) \, \otimes \, \underline{N} (*) \oplus Im(tr))/ _{\sim} \ar@/_1pc/[d]_{res_e^{C_p}} \ar[rr]^-{\phi_{e}} & & \Z \oplus \Z \ar@/_1pc/[d]_{{1}\choose{p}}\\
\underline{M}(C_p) \, \otimes \, \underline{N}(C_p) \ar@/_1pc/[u]_{tr_e^{C_p}} \ar[rr]^-{\phi_{C_p}} & & \Z \ar@/_1pc/[u]_{(0 \,\, 1)} }\] \\
From this, we can determine what each tier of $\underline{M}$ must look like. First notice that the above isomorphism of Mackey functors implies there is an isomorphism of abelian groups $\underline{M}(C_p) \, \otimes \, \underline{N}(C_p) \simeq \underline{A}(C_p)$ respecting the action of $C_p$. In other words, $\underline{M}(C_p) \, \otimes \, \underline{N}(C_p) \simeq \Z$ as $C_p$-modules. For now let us consider $p$ odd. By Lemma \ref{lemma: Invertible AbGrp}, we must have $\underline{M}(C_p) = \Z$. Then half of our work is already done! To determine what $(\underline{M} \, \Box \, \underline{N} )(*)$ must look like, we begin with the following result. 

\begin{defn}A $G$-Mackey functor $\underline{M}$ is \textit{torsion free} if $\underline{M}(G/H)$ is a torsion free abelian group for each subgroup $H$ of $G$. \end{defn}

\begin{prop} \label{prop: Torsion Free} Let $\underline{M} \in \mathfrak{M}_{C_p}$, $p$ odd, be invertible. Then $\underline{M}$ is torsion free. 
\end{prop}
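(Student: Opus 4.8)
The plan is to peel the torsion off $\underline M$ as a sub-Mackey functor, box it against the inverse of $\underline M$, and observe that the result embeds into $\underline A$, which is torsion free in both tiers. First I would record the reduction: by invertibility fix $\underline N\in\mathfrak M_{C_p}$ with $\underline M\,\Box\,\underline N\simeq\underline A$; as observed just before the statement, the $C_p$-tier of this isomorphism gives $\underline M(C_p)\otimes\underline N(C_p)\simeq\Z$ as $C_p$-modules, so Lemma~\ref{lemma: Invertible AbGrp} forces $\underline M(C_p)\cong\Z$ (this is where $p$ odd is used). In particular $\underline M(C_p)$ is already torsion free, so the entire content of the proposition is that $\underline M(*)$ is torsion free.

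Next I would introduce the sub-Mackey functor $\underline T\le\underline M$ with $\underline T(*)$ equal to the torsion subgroup of $\underline M(*)$ and $\underline T(C_p)=0$. This is a genuine sub-Mackey functor: $res$ carries a torsion element of $\underline M(*)$ into the torsion of $\underline M(C_p)\cong\Z$, i.e.\ to $0$, while $tr$ out of $\underline T(C_p)=0$ and the Mackey relations on the zero group are trivially satisfied, so the tierwise inclusions assemble into a monomorphism $\underline T\hookrightarrow\underline M$. The goal becomes: show $\underline T=0$.

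Now box with $\underline N$. Since $\underline N$ is invertible, $-\Box\underline N$ is an equivalence of the abelian category $\mathfrak M_{C_p}$ (quasi-inverse $-\Box\underline M$), hence exact, so the monomorphism $\underline T\hookrightarrow\underline M$ yields a monomorphism $\underline T\Box\underline N\hookrightarrow\underline M\Box\underline N\simeq\underline A$. Reading the two tiers of $\underline T\Box\underline N$ off the definition of $\Box$: the $C_p$-tier is $\underline T(C_p)\otimes\underline N(C_p)=0$, and the $*$-tier is a quotient of $\underline T(*)\otimes\underline N(*)$, which is a torsion abelian group because $\underline T(*)$ is torsion. So $\underline T\Box\underline N$ is torsion in both tiers while $\underline A$ is torsion free in both tiers, and an injection of a torsion group into a torsion-free group is zero; hence $\underline T\Box\underline N=0$. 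Finally, by associativity, symmetry, and $\underline M\Box\underline N\simeq\underline A$ we get $\underline T\simeq\underline T\Box\underline A\simeq(\underline T\Box\underline N)\Box\underline M\simeq 0$, so the torsion subgroup of $\underline M(*)$ vanishes and $\underline M$ is torsion free.

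The step that deserves real care is the claim that $-\Box\underline N$ preserves the monomorphism $\underline T\hookrightarrow\underline M$; the clean justification is that an invertible object makes $-\Box\underline N$ an equivalence of the abelian category $\mathfrak M_{C_p}$, hence exact. If one prefers to avoid invoking this, one can instead apply right-exactness of $-\Box\underline N$ to the short exact sequence $0\to\underline T\to\underline M\to\underline M/\underline T\to 0$ and check injectivity of $\underline T\Box\underline N\to\underline M\Box\underline N$ directly on each tier from the explicit box-product formula (on the $C_p$-tier it is $0\hookrightarrow\Z$, and on the $*$-tier one verifies that the Frobenius relations contribute no kernel). Everything else—verifying that $\underline T$ is a sub-Mackey functor, the torsion bookkeeping, and the closing associativity manipulation—is routine.
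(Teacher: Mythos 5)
Your proof is correct, but it takes a genuinely different route from the paper's. The paper runs the isotropy separation sequence $0\to\Gamma_{C_p}(\underline{M})\to\underline{M}\to\Phi^{C_p}(\underline{M})\to 0$, shows that geometric fixed points are monoidal (Proposition \ref{prop: GeomIso}) so that $\Phi^{C_p}(\underline{M})(*)$ is an invertible abelian group and hence $\Z$, shows separately that $\Gamma_{C_p}(\underline{M})(*)\cong\Z^k$ with $k\le 1$, and then splits the extension $0\to\Z^k\to\underline{M}(*)\to\Z\to 0$ because $\Z$ is free; this yields not just torsion-freeness but the finer statement $\underline{M}(*)\cong\Z^k\oplus\Z$, which the paper reuses immediately afterwards to pin down $\underline{M}(*)\cong\Z\oplus\Z$. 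You instead isolate the torsion of $\underline{M}(*)$ as a sub-Mackey functor $\underline{T}$ (legitimate, since restriction kills torsion once $\underline{M}(C_p)\cong\Z$ is known) and annihilate it by boxing with the inverse: $-\,\Box\,\underline{N}$ is an equivalence of the abelian category $\mathfrak{M}_{C_p}$, hence preserves monomorphisms, and a tierwise-torsion Mackey functor cannot inject into the torsion-free $\underline{A}$; the closing associativity computation then forces $\underline{T}=0$. Your route is shorter and bypasses the $\Gamma/\Phi$ machinery entirely, at the price of invoking the full coherence of the symmetric monoidal structure (associativity, symmetry, and the exactness of boxing with an invertible object), which the paper asserts but never verifies for its explicit construction of $\Box$; to keep the proof at the paper's level of self-containedness you should either cite a reference for that monoidal structure or carry out your fallback tierwise injectivity check. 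Note also that your argument establishes only torsion-freeness, so the later classification still needs the paper's identification of $\Gamma_{C_p}(\underline{M})$ and $\Phi^{C_p}(\underline{M})$ to determine $\underline{M}(*)$ up to isomorphism.
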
 

In order to prove this statement, we need to develop a tool that is a Mackey functor analog to the isotropy separation sequence in equivariant stable homotopy theory. This will ``separate" our Mackey functor into parts controlled by $\underline{M}(*)$ and those controlled by $\underline{M}(C_p)$. We begin by defining a few useful Mackey functors. With this motivation coming from another branch of mathematics, these seemingly simple algebraic objects will come with rather complicated geometric names. The name will be kept for completeness rather than direct relevance to the content of this paper.

\begin{defn}We can define the \textit{Borel nilpotent completion of \underline{M}} as follows.

\[ \xymatrixrowsep{.5pc} \xymatrixcolsep{1pc} \xymatrix {
& tr(\underline{M}(C_p)) \ar@/_1pc/[dd]\\
\Gamma_{C_p}(\underline{M}) := & \\
& \underline{M}(C_p) \ar@/_1pc/[uu]
}\]
The transfer map is that of $\underline{M}$ and restriction is given by the formula $res(tr(x)) = \sum_{\gamma \in C_p} \gamma \cdot x$. There is an obvious injection $\Gamma_{C_p}(\underline{M}) \hookrightarrow \underline{M}$ given from the group inclusion $tr(\underline{M}(C_p)) \subset \underline{M}(*)$.
\end{defn} 

\begin{lemma} \label{lemma: Gamma Torsion Free}Suppose $\underline{N} \in \mathfrak{M}_G$ is such that $\underline{N}(C_p)$ is torsion free and has trivial $C_p$-action. Then $\Gamma_{C_p}(\underline{N})$ is torsion free. 
\end{lemma}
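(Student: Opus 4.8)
The plan is to verify torsion-freeness one orbit at a time. At $C_p/e$ we have $\Gamma_{C_p}(\underline{N})(C_p) = \underline{N}(C_p)$ by construction, which is torsion free by hypothesis, so there is nothing to do there. All the content is at the fixed-point level, where $\Gamma_{C_p}(\underline{N})(*) = tr(\underline{N}(C_p)) \subseteq \underline{N}(*)$ is the image of the transfer, i.e.\ a quotient of the torsion-free group $\underline{N}(C_p)$ by $\ker(tr)$. Since quotients of torsion-free abelian groups need not be torsion free, the hypothesis on the $C_p$-action has to be used.

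The key point is that, because $\underline{N}(C_p)$ carries the trivial $C_p$-action, the double coset formula --- property (3) of Definition \ref{defn: CpMF}, which is also exactly the defining restriction formula for $\Gamma_{C_p}$ --- becomes $res(tr(x)) = \sum_{\gamma \in C_p}\gamma\cdot x = px$ for all $x \in \underline{N}(C_p)$. Consequently the restriction map of $\Gamma_{C_p}(\underline{N})$, viewed on its domain $tr(\underline{N}(C_p))$, is injective: if $tr(x)$ and $tr(x')$ restrict to the same element then $p(x - x') = 0$ in the torsion-free group $\underline{N}(C_p)$, so $x = x'$ and hence $tr(x) = tr(x')$. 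Therefore $res$ embeds $tr(\underline{N}(C_p))$ into the torsion-free group $\underline{N}(C_p)$, and a subgroup of a torsion-free abelian group is torsion free; this gives that $\Gamma_{C_p}(\underline{N})(*)$ is torsion free, completing the argument. (One can avoid the embedding language: if $n\cdot tr(x) = 0$ for a positive integer $n$, then $tr(nx) = 0$, so $0 = res(tr(nx)) = p\,nx$, and torsion-freeness of $\underline{N}(C_p)$ forces $x = 0$, hence $tr(x) = 0$.)

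I do not expect a real obstacle here. The only thing to get right is the observation that the triviality of the $C_p$-action on $\underline{N}(C_p)$ is precisely what turns $res \circ tr$ into multiplication by $p$, which is injective on a torsion-free group; without that hypothesis the quotient $tr(\underline{N}(C_p))$ could acquire $p$-torsion and the statement would fail.
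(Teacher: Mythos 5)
Your proof is correct and is essentially the paper's argument: both hinge on the observation that the trivial $C_p$-action makes $res\circ tr$ multiplication by $p$, so an $n$-torsion element $tr(x)$ forces $pnx=0$ in the torsion-free group $\underline{N}(C_p)$, hence $x=0$ and $tr(x)=0$. Your reformulation via the injectivity of $res$ on $tr(\underline{N}(C_p))$ is a harmless repackaging of the same computation.
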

\begin{proof}First suppose that $im(tr) \neq 0$ else the result is clear. Observe that given these conditions, $im(tr)$ must be torsion free. Else, there is some $x \neq 0 \in im(tr) \subset \underline{N}(*)$ that is $n$- torsion for some $n\in \N$. Then $0= res(nx) = n \cdot res(x) = n \cdot res(tr(y)) = \sum_{\gamma \in C_p} y = p y$ for some $y \in \underline{N}(C_p)$. However, then $y$ is $p$-torsion in $\Z$, a contradiction. Then $\Gamma_{C_p}(\underline{N})(*) = im(tr)$ and $\Gamma_{C_p}(\underline{N})(C_p) = \Z$ are both torsion free and hence $\Gamma_{C_p}(\underline{N})$ is.
\end{proof}

\begin{defn}Let $\Phi^{C_p}(\underline{M})$ be the cokernel of the inclusion $\Gamma_{C_p}(\underline{M}) \hookrightarrow \underline{M}$. These are called the \textit{geometric fixed points of \underline{M}}. 
\[ \xymatrixrowsep{.5pc} \xymatrixcolsep{1pc} \xymatrix {
& \underline{M}(*)/ tr(\underline{M}(C_p) \ar@/_1pc/[dd]\\
\Phi^{C_p}(\underline{M}) := & \\
& 0 \ar@/_1pc/[uu]
}\]
If the map $\underline{M} \rightarrow \Phi^{C_p}$ is an isomorphism, we say that $\underline{M}$ is \textit{geometric}. 
\end{defn}

Using these definitions, to any Mackey functor $\underline{M}$, we can associate the following short exact sequence, called the \textit{isotropy separation sequence}. 

$$0 \longrightarrow \Gamma_{C_p}(\underline{M}) \longrightarrow \underline{M} \longrightarrow \Phi^{C_p}(\underline{M}) \longrightarrow 0$$

\begin{exmp}For the Burnside Mackey Functor $\underline{A}$, the isotropy separation sequence is given by the following. 

\[\xymatrixrowsep{.5pc} \xymatrixcolsep{.5pc} \xymatrix {
& & \Z\langle [C_p] \rangle \ar@/_1pc/[dd] & & \Z \langle 1, [C_p]\ar@/_1pc/[dd] \rangle & & \Z\ar@/_1pc/[dd] & & \\
0 & \longrightarrow & & \longrightarrow & & \longrightarrow & & \longrightarrow & 0 \\
& & \Z \ar@/_1pc/[uu] & & \Z\ar@/_1pc/[uu] & & 0\ar@/_1pc/[uu] & & 
} \]
\end{exmp}

\begin{prop} \label{prop: GeomIso}
Let $\underline{M}, \underline{N} \in \mathfrak{M}_G$. There is a natural isomorphism 
$$\Phi^{C_p}(\underline{M} \, \Box \, \underline{N}) \cong \Phi^{C_p}(\underline{M}) \, \Box \, \Phi^{C_p}(\underline{N})$$
\end{prop}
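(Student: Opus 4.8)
The plan is to compute both sides directly from the algebraic definition of the box product and the definition of $\Phi^{C_p}$, recognize them as the same abelian group via right-exactness of the tensor product, and then observe that every identification used is natural.

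First I would unwind the left-hand side. By definition $\Phi^{C_p}(\underline{M} \, \Box \, \underline{N})$ is concentrated over $*$, with value the cokernel of $\Gamma_{C_p}(\underline{M} \, \Box \, \underline{N}) \hookrightarrow \underline{M} \, \Box \, \underline{N}$ at $*$, i.e. $(\underline{M} \, \Box \, \underline{N})(*) \big/ tr\big((\underline{M} \, \Box \, \underline{N})(C_p)\big)$. The transfer map of the box product factors as $\underline{M}(C_p) \otimes \underline{N}(C_p) \to (\underline{M}(C_p) \otimes \underline{N}(C_p))/_{C_p} = Im(tr)$ followed by the inclusion of $Im(tr)$ as a summand of $(\underline{M} \, \Box \, \underline{N})(*)$ (prior to imposing $\sim$), so $tr\big((\underline{M} \, \Box \, \underline{N})(C_p)\big)$ is exactly the image of that summand. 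Quotienting by it deletes the $Im(tr)$ summand and collapses the two Frobenius relations $x \otimes tr(y) \sim tr(r(x) \otimes y)$ and $tr(x) \otimes y \sim tr(x \otimes r(y))$ to $x \otimes tr(y) = 0$ and $tr(x) \otimes y = 0$. Hence $\Phi^{C_p}(\underline{M} \, \Box \, \underline{N})$ is the Mackey functor concentrated over $*$ with value $\underline{M}(*) \otimes \underline{N}(*)$ modulo the subgroup generated by the images of $tr_{\underline{M}}(a) \otimes b$ and $a' \otimes tr_{\underline{N}}(b')$, with $a \in \underline{M}(C_p)$, $b \in \underline{N}(*)$, $a' \in \underline{M}(*)$, $b' \in \underline{N}(C_p)$.

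Next I would compute the right-hand side. Both $\Phi^{C_p}(\underline{M})$ and $\Phi^{C_p}(\underline{N})$ are concentrated over $*$, so have zero transfer; feeding such Mackey functors into the definition of the box product, the $Im(tr)$-summand vanishes and the relations $\sim$ become vacuous, so the box product of two Mackey functors concentrated over $*$ is again concentrated over $*$ with value the tensor product of their $*$-levels. Thus $\Phi^{C_p}(\underline{M}) \, \Box \, \Phi^{C_p}(\underline{N})$ is concentrated over $*$ with value $\big(\underline{M}(*)/tr_{\underline{M}}(\underline{M}(C_p))\big) \otimes \big(\underline{N}(*)/tr_{\underline{N}}(\underline{N}(C_p))\big)$. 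The standard identity $(A/A') \otimes (B/B') \cong (A \otimes B)\big/\langle\, A' \otimes B,\ A \otimes B' \,\rangle$ — a double application of right-exactness of $-\otimes-$ — identifies this with the group obtained in the previous paragraph, via the map induced by the canonical quotients $A \to A/A'$, $B \to B/B'$. For naturality, every identification above is built from canonical quotient and inclusion maps that are natural in each variable, so a pair of Mackey functor morphisms $\underline{M} \to \underline{M}'$, $\underline{N} \to \underline{N}'$ induces the same homomorphism under both descriptions; chasing generators $a \otimes b$ verifies the naturality square commutes.

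The step I expect to require the most care is the bookkeeping in unwinding the left-hand side: checking that the image of the box-product transfer is \emph{precisely} the summand $Im(tr)$ and is not enlarged once the $\sim$-relations are imposed, and that after quotienting by it exactly the two degenerate Frobenius relations $x \otimes tr(y)=0$, $tr(x)\otimes y=0$ survive (and no others). Once that presentation of $\Phi^{C_p}(\underline{M} \, \Box \, \underline{N})(*)$ is pinned down, matching it with $\Phi^{C_p}(\underline{M}) \, \Box \, \Phi^{C_p}(\underline{N})$ and verifying naturality are purely formal.
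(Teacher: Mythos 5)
Your proposal is correct and follows essentially the same route as the paper: compute the right-hand side as a tensor product of the $*$-levels (the transfers being zero, so no Frobenius relations), present the left-hand side as $\underline{M}(*)\otimes\underline{N}(*)$ modulo the images of $tr_{\underline{M}}(a)\otimes b$ and $a'\otimes tr_{\underline{N}}(b')$, and identify the two by right-exactness of the tensor product. You spell out the bookkeeping (that killing $Im(tr_\Box)$ degenerates the Frobenius relations to $x\otimes tr(y)=0$ and $tr(x)\otimes y=0$) and the naturality more explicitly than the paper, which leaves those as an exercise.
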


\begin{proof}
We first compute the box product $\Phi^{C_p}(\underline{M}) \, \Box \, \Phi^{C_p}(\underline{N})$. This is much easier since these Mackey functors are geometric. 
\[ \xymatrixrowsep{.5pc} \xymatrixcolsep{.5pc}\xymatrix{
\underline{M}(*)/Im(tr_M) \ar@/_1pc/[dd] & & \underline{N}(*)/Im(tr_N) \ar@/_1pc/[dd] & & \underline{M}(*)/Im(tr_M) \otimes \underline{N}(*)/Im(tr_N) \ar@/_1pc/[dd] \\
& \Box & & = & \\
0 \ar@/_1pc/[uu] & & 0 \ar@/_1pc/[uu] & & 0 \ar@/_1pc/[uu]
}\]
Observe that there are no added relations from Frobenius reciprocity since in all cases the values of the transfers are zero. The computation for $ \Phi^{C_p}(\underline{M} \, \Box \, \underline{N})$ is more messy. Let $tr_\Box$ denote the transfer map of the box product. 
\[ \xymatrixrowsep{.5pc} \xymatrixcolsep{1pc} \xymatrix {
& (((\underline{M}(*) \otimes \underline{N}(*)) \oplus Im(tr_\Box))/\sim) / Im(tr_\Box) \ar@/_1pc/[dd]\\
\Phi^{C_p}(\underline{M} \, \Box \, \underline{N}) = &  \\
& 0 \ar@/_1pc/[uu] 
}\]
There is an obvious isomorphism of the bottom tiers. For the top, we must first understand the top tier of $\Phi^{C_p}(\underline{M} \, \Box \, \underline{N})$. Since all elements of $Im(tr_\Box)$ are now zero in the quotient, it may be tempting to simplify this to $\underline{M}(*) \otimes \underline{N}(*)$. However, Frobenius reciprocity often produces new relations between the generators of $\underline{M}(*) \otimes \underline{N}(*)$ and those of $Im(tr_\Box)$. We should consider instead the following. 
$$\underline{M}(*) \otimes \underline{N}(*)) \big / \langle a \otimes tr(y), tr(x) \otimes b \rangle$$
It is an exercise in algebra to check that this is in fact isomorphic to 
$$\underline{M}(*)/Im(tr_M) \otimes \underline{N}(*)/Im(tr_N)$$
This completes the proof.
\end{proof}

\begin{lemma}Let $\underline{M} \in \mathfrak{M}_G$ be invertible. Then $\Phi^{C_p}(\underline{M})(*) \simeq \Z$ as abelian groups. 
\end{lemma}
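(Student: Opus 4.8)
The plan is to push an invertibility relation for $\underline{M}$ through the geometric fixed points functor and then invoke the classification of invertible abelian groups established in the proof of Lemma \ref{lemma: Invertible AbGrp}.

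Since $\underline{M}$ is invertible, there is $\underline{N} \in \mathfrak{M}_{C_p}$ together with an isomorphism $\underline{M} \, \Box \, \underline{N} \cong \underline{A}$. Applying $\Phi^{C_p}(-)$ to this isomorphism and using the natural isomorphism of Proposition \ref{prop: GeomIso}, I obtain an isomorphism of Mackey functors
$$\Phi^{C_p}(\underline{M}) \, \Box \, \Phi^{C_p}(\underline{N}) \cong \Phi^{C_p}(\underline{A}).$$
From the example computing the isotropy separation sequence for $\underline{A}$, the right-hand side is the geometric Mackey functor whose top tier is $\Z \langle 1, [C_p]\rangle / \Z\langle [C_p]\rangle \cong \Z$ and whose bottom tier is $0$.

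Next I read off the top tier of the left-hand side. Both $\Phi^{C_p}(\underline{M})$ and $\Phi^{C_p}(\underline{N})$ are geometric: their bottom tiers vanish, so all of their transfer maps are zero. Hence, exactly as in the computation carried out in the proof of Proposition \ref{prop: GeomIso}, the relation $\sim$ introduces no new identifications and the box product is again geometric with top tier equal to the tensor product of the top tiers. Evaluating at $*$ therefore gives an isomorphism of abelian groups
$$\Phi^{C_p}(\underline{M})(*) \otimes_\Z \Phi^{C_p}(\underline{N})(*) \cong \Z.$$
This exhibits $\Phi^{C_p}(\underline{M})(*)$ as an invertible $\Z$-module, so by the first half of the proof of Lemma \ref{lemma: Invertible AbGrp} (the only invertible $\Z$-module up to isomorphism is $\Z$ itself), we conclude $\Phi^{C_p}(\underline{M})(*) \simeq \Z$ as abelian groups.

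The only step that needs genuine care — and hence the main, though minor, obstacle — is the claim that the box product of two geometric Mackey functors is again geometric with top tier the tensor product of top tiers, i.e. that neither the Frobenius relation $\sim$ nor the $C_p$-quotient appearing in the definition of $\Box$ collapses anything. This is precisely the content of the computation in the proof of Proposition \ref{prop: GeomIso}, so it can be cited rather than repeated. (Note that since $\Phi^{C_p}(\underline{M})(*)$ is the top tier of a Mackey functor it carries trivial group action, so the distinction between invertible abelian groups and invertible $C_p$-modules is irrelevant here, and the argument is insensitive to the parity of $p$.)
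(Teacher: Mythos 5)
Your proposal is correct and follows essentially the same route as the paper: apply $\Phi^{C_p}$ to the isomorphism $\underline{M}\,\Box\,\underline{N}\cong\underline{A}$, use Proposition \ref{prop: GeomIso} to identify the top tier with a tensor product, and conclude via Lemma \ref{lemma: Invertible AbGrp}. Your extra remark spelling out why the box product of geometric Mackey functors has top tier equal to the tensor product of top tiers is a welcome clarification of what the paper compresses into ``by definition.''
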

\begin{proof}Since $\underline{M}$ is invertible, there is some $\underline{N} \in \mathfrak{M}_G$ such that $\underline{M} \, \Box \, \underline{N} \simeq \underline{A}$. Then we have the following isomorphism by Proposition \ref{prop: GeomIso}. 
$$\Phi^{C_p}(\underline{A}) \cong \Phi^{C_p}(\underline{M}) \, \Box \, \Phi^{C_p}(\underline{N})$$
By definition, this implies there is an isomorphism of abelian groups $$\Z = \Phi^{C_p}(\underline{A})(*) \cong \Phi^{C_p}(\underline{M})(*) \otimes \Phi^{C_p}(\underline{N})(*)$$
Then $\Phi^{C_p}(\underline{M})(*)$ is invertible as an abelian group and hence isomorphic to $\Z$ by Lemma \ref{lemma: Invertible AbGrp}. 
\end{proof}

\begin{lemma}Let $\underline{M} \in \mathfrak{M}_{C_p}$ be invertible. Then $\Gamma_{C_p}(\underline{M}) \simeq \Z^k,$ $k \leq 1$. \end{lemma}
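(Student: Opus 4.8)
The plan is to describe the two tiers of $\Gamma_{C_p}(\underline{M})$ one at a time; the bottom tier is essentially already known, so the real work is the top tier $\Gamma_{C_p}(\underline{M})(*) = tr_e^{C_p}(\underline{M}(C_p))$, which is what the asserted $\Z^k$ with $k\le 1$ refers to. First I would invoke the preceding lemmas: since $\underline{M}$ is invertible there is an abelian-group isomorphism $\underline{M}(C_p)\otimes\underline{N}(C_p)\simeq\Z$ respecting the $C_p$-action, so $\underline{M}(C_p)$ is an invertible $C_p$-module and hence, by Lemma~\ref{lemma: Invertible AbGrp}, has underlying group $\Z$ (with trivial action when $p$ is odd). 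Thus $\Gamma_{C_p}(\underline{M})(C_p)=\underline{M}(C_p)\simeq\Z$, and it remains only to pin down $tr(\underline{M}(C_p))$.

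Next I would fix a generator $b$ of $\underline{M}(C_p)\simeq\Z$ and note that $tr(\underline{M}(C_p))=\langle tr(b)\rangle$ is cyclic, so a priori isomorphic to $0$, to $\Z$, or to $\Z/n\Z$ with $n\ge 2$; the whole point is to rule out the last. The tool is the restriction formula built into $\Gamma_{C_p}(\underline{M})$: for $p$ odd, $res(tr(b))=\sum_{\gamma\in C_p}\gamma\cdot b = p\,b$, a nonzero element of the torsion-free group $\Z$. Consequently, if $n\cdot tr(b)=0$ then $np\,b = n\cdot res(tr(b)) = res(n\cdot tr(b)) = 0$ in $\Z$, forcing $n=0$; so $tr(b)$ has infinite order and $tr(\underline{M}(C_p))$ is $\Z$ (if $tr\neq 0$) or $0$ (if $tr=0$). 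Hence $\Gamma_{C_p}(\underline{M})(*)\simeq\Z^k$ with $k\le 1$, as claimed, the bottom tier being the single copy of $\Z$ it always carries.

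I expect the one genuine obstacle to be the prime $p=2$: for $p$ odd the argument above is short and essentially automatic. When $p=2$, Lemma~\ref{lemma: Invertible AbGrp} also permits $\underline{M}(C_2)=\Z_-$, and for the sign module the norm $\sum_{\gamma}\gamma\cdot b$ vanishes, so the restriction argument collapses; the Mackey relation $tr(\gamma\cdot b)=tr(b)$ then only yields $2\,tr(b)=0$, leaving $tr(\underline{M}(C_2))$ possibly equal to $\Z/2\Z$. To stay within the statement one has to exclude this, either by carrying the ``$p$ odd'' hypothesis through from Proposition~\ref{prop: Torsion Free}, or by exploiting the full isomorphism $\underline{M}\,\Box\,\underline{N}\simeq\underline{A}$ to show the sign module cannot occur as the value at $C_2$ of an invertible Mackey functor. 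Apart from that wrinkle, the proof reduces to the classification of cyclic groups plus a single application of $res\circ tr = {}$norm.
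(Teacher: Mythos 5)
Your proposal is correct and follows essentially the same route as the paper: the paper cites Lemma \ref{lemma: Gamma Torsion Free} to get that $tr(\underline{M}(C_p))$ is torsion free and then observes it is a quotient of $\Z$, whereas you simply inline that lemma's argument ($res\circ tr = p$ on a torsion-free $\Z$ kills any torsion in the image of $tr$) and conclude the same way. Your caveat about $p=2$ and $\Z_-$ is well taken but does not change the approach; the paper works under the standing ``$p$ odd'' assumption here and disposes of the sign-module case separately at the end of the section.
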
 
\begin{proof} Since $\underline{M}$ is invertible, we have $\underline{M}(C_p) \simeq \Z$. Then we can apply Lemma \ref{lemma: Gamma Torsion Free} so that $tr(\underline{M}(C_p)) \simeq \Z^k$ for some $k \in \N$. Since $tr(\underline{M}(C_p))$ is the image of a homomorphism whose domain is $\underline{M}(C_p) = \Z$, we must necessarily have $k \leq 1$. 
\end{proof}

We now have all the tools needed for the proof of Proposition \ref{prop: Torsion Free}. 

\begin{proof} (of Proposition 4.8) Suppose $\underline{M} \in \mathfrak{M}_G$ is invertible. We know $\underline{M}(C_p) \simeq \Z$ so all we need to show is $\underline{M}(*)$ is torsion free. We can write down the isotropy separation sequence of $\underline{M}$, with $\Gamma_{C_p}(\underline{M})$ and $\Phi^{C_p}(\underline{M})$ torsion free. In fact, from the above results, we know exactly what these Mackey functors look like.

\[\xymatrixrowsep{.5pc} \xymatrixcolsep{.5pc} \xymatrix {
& & \Z^k \ar@/_1pc/[dd] & & \underline{M}(*) \ar@/_1pc/[dd] & & \Z\ar@/_1pc/[dd] & & \\
0 & \longrightarrow & & \longrightarrow & & \longrightarrow & & \longrightarrow & 0 \\
& & \Z \ar@/_1pc/[uu] & & \Z\ar@/_1pc/[uu] & & 0\ar@/_1pc/[uu] & & 
} \]
for $k=0,1$. From this exact sequence of Mackey functors, we obtain the following short exact sequence of abelian groups.

$$0 \longrightarrow \Z^k \longrightarrow \underline{M}(*) \longrightarrow \Z \longrightarrow 0$$
Since $\Z$ is free, this short exact sequence splits, so that $\underline{M}(*) \simeq \Z ^k\oplus \Z$, which is torsion free as desired.
\end{proof}

However, we can be even more specific when it comes to classifying the possibilities for $\underline{M}(*)$. In fact, up to isomorphism, there is only one. 

\begin{lemma}Let $\underline{M} \in \mathfrak{M}_{C_p}$, $p$ odd, be invertible. Then $\underline{M}(*) \simeq \Z \oplus \Z$. \end{lemma}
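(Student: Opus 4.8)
The plan is to invoke the preceding lemma, which already shows $\underline{M}(*) \simeq \Z^k \oplus \Z$ with $k \in \{0,1\}$, and then to rule out the case $k=0$; the case $k=1$ is precisely the assertion.

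First I would record what invertibility together with $p$ odd buys us on the bottom tier. As in the discussion preceding Proposition \ref{prop: Torsion Free}, an invertible $\underline{M}$ admits $\underline{N} \in \mathfrak{M}_{C_p}$ with $\underline{M}(C_p) \otimes \underline{N}(C_p) \simeq \Z$ as $C_p$-modules, and by Lemma \ref{lemma: Invertible AbGrp}(a) the unique invertible $C_p$-module for $p$ odd is $\Z$ with \emph{trivial} $C_p$-action. Hence $\underline{M}(C_p) \simeq \Z$ with trivial action.

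Next I would argue by contradiction: suppose $k=0$. By definition of $\Gamma_{C_p}(\underline{M})$ and the previous lemma, this means $tr(\underline{M}(C_p)) = 0$, i.e. the transfer $tr_e^{C_p}\colon \underline{M}(C_p) \to \underline{M}(*)$ is identically zero. Applying the third axiom of Definition \ref{defn: CpMF} to any $y \in \underline{M}(C_p) \simeq \Z$ then gives
\[
0 = res_e^{C_p}\big(tr_e^{C_p}(y)\big) = \sum_{\gamma \in C_p} \gamma \cdot y = p\,y,
\]
where the last equality uses the triviality of the action established above. Taking $y = 1$ yields $p = 0$ in $\Z$, a contradiction. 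Therefore $k = 1$, and $\underline{M}(*) \simeq \Z \oplus \Z$.

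There is no serious obstacle here; the only point requiring care is the use of Lemma \ref{lemma: Invertible AbGrp}(a) to conclude that the $C_p$-action on $\underline{M}(C_p)$ is trivial — this is exactly where the hypothesis that $p$ is odd enters, and it is what makes the identity $\sum_{\gamma} \gamma \cdot y = py$ available in the final step.
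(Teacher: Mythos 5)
Your proof is correct, and the step that does the real work --- ruling out $k=0$ --- is genuinely different from the paper's. The paper rules out $k=0$ by brute force: it writes down a hypothetical rank-one $\underline{M}(*)$ with zero transfer, takes a putative inverse $\underline{N}$, computes $(\underline{M}\,\Box\,\underline{N})(*)$ via Frobenius reciprocity, and observes the result has rank one, hence cannot be $\underline{A}(*)\simeq\Z\oplus\Z$. You instead notice that such an $\underline{M}$ cannot exist as a Mackey functor at all: once $p$ is odd and $\underline{M}(C_p)\simeq\Z$ carries the trivial action (which is exactly what Lemma \ref{lemma: Invertible AbGrp}(a) forces), the axiom $res\circ tr(y)=\sum_{\gamma}\gamma\cdot y=py$ shows the transfer is injective, so $tr(\underline{M}(C_p))\neq 0$ and $k=1$ is automatic. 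This is shorter, avoids any further box-product computation, and in fact shows that the case the paper analyzes is vacuous from the outset; the only price is that you must be explicit that the action on $\underline{M}(C_p)$ is trivial (which you are --- and this is indeed where $p$ odd enters, since for $p=2$ the sign action would give $\sum_\gamma \gamma\cdot y=0$ and the argument would collapse). Both arguments rest on the same inputs (the previous lemma giving $\underline{M}(*)\simeq\Z^k\oplus\Z$ and the identification of $\Z^k$ with $tr(\underline{M}(C_p))$), so your proof is a clean, valid replacement for the paper's.
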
 
\begin{proof} For $\underline{M}$ invertible, we have already shown that $\underline{M}(*) \simeq \Z^k \oplus \Z$, $k=0,1$. Recall that for $d$ relatively prime to $p$, we have that $_d \underline{A}$ is invertible with $_d \underline{A}(*) \simeq \Z \oplus \Z$. Then $k=1$ is realized. However, $k=0$ is not. To see this, suppose $\underline{M}$ is invertible with inverse $\underline{N}$ and $\underline{M}(*) \simeq \Z$. Then $im(tr) = 0$. We shall label all restriction and transfer maps as follows, for some $c,d_1,d_2 \in \Z$.
\[ \xymatrixrowsep{.5pc} \xymatrix{
\Z \langle x \rangle \ar@/_1pc/[dd]_c & & \Z\langle y_1,y_2 \rangle \ar@/_1pc/[dd]_{{d_1}\choose{d_2}} & & \Z \langle x \otimes y_1, x \otimes y_2, tr(1 \otimes 1)\rangle \ar@/_1pc/[dd]_{res} \\
& \Box & & = & \\
\Z \ar@/_1pc/[uu]_0 & & \Z \ar@/_1pc/[uu]_{(0 \,\, 1)} & & \Z \langle 1 \otimes 1 \rangle \ar@/_1pc/[uu]_{tr}
}\]
Notice that for the transfer map of $\underline{N}$, we have chosen $1 \mapsto y_2$. All computations that follow are analogous for $1 \mapsto y_1$. We now compute the relations given by Frobenius reciprocity.
$$c \cdot tr(1 \otimes 1) = tr(c \otimes 1) \sim x \otimes tr(1) = x \otimes y_2$$
$$d_1 \cdot tr(1 \otimes 1) =  tr(1 \otimes d_1) \sim tr(1) \otimes y_1 = 0$$
$$d_2 \cdot tr(1\otimes 1) =  tr(1 \otimes d_2) \sim tr(1) \otimes y_1 = 0$$
Then $(\underline{M} \, \Box \, \underline{N})(*) = \Z \langle x \otimes y_1 \rangle$ and cannot be isomorphic to $\underline{A}(*) \simeq \Z \oplus \Z$. Analogous computations hold for $\underline{N}(*) \simeq \Z$ so that there is no $\underline{N}$ such that $(\underline{M} \, \Box \, \underline{N}) \simeq \underline{A}$, contradicting that $\underline{M}$ is invertible. Then $k=1$, as desired.
\end{proof} 

We have now shown that for $p$ odd, all invertible $C_p$-Mackey functors $\underline{M}$ have the following form.
\[ \xymatrix{
\Z \oplus \Z \ar@/_1pc/[d]_{res}\\
\Z \ar@/_1pc/[u]_{tr}
}\]
All that remains to determine are the values of the restriction and transfer maps on each generator. By the splitting of the isotropy separation sequence, we know that one summand is generated by $tr(1)$. Then $r(tr(1)) = p$, giving the value on the restriction map on one of the generators. Letting the other generator map to any integer $d$, we see that $\underline{M}$ is isomorphic to the twisted Burnside Mackey functor $_d \underline{A}$. Since its inverse $\underline{N}$ is also invertible, it must be of the same form so that $\underline{N} \simeq {_{d'} \underline{A}}$ for some $d' \in \Z$. By Lemma \ref{lemma: Invertible Twisted}, we must have $d$ relatively prime to $p$. These results can be summarized by the following statement, which for now we have only shown for $p$ odd. However, we shall state it without this condition.  

\begin{prop} \label{prop: Invertible odds}Let $\underline{M} \in \mathfrak{M}_{C_p}$ be invertible. Then $\underline{M} \simeq {_d\underline{A}}$ for some $d \in \Z$ such that $(d,p)=1$. $\quad \quad \quad \quad \quad \quad \quad \quad \quad \quad \quad \quad \quad \quad \quad \quad \quad \quad \quad \quad \quad \quad \quad \quad \quad \quad \quad \,\,\,\,\, \blacksquare$ 
\end{prop}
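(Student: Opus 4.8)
The statement has already been established for $p$ odd by the chain of lemmas culminating just above, so the plan is to treat the remaining prime $p=2$ by reducing it to exactly that chain. The only place where oddness of $p$ entered was Lemma~\ref{lemma: Invertible AbGrp}: at $p=2$ there are two invertible $C_2$-modules, $\Z$ and $\Z_-$, rather than one. So, writing $\underline{N}$ for a box-inverse of $\underline{M}$ together with an isomorphism $\underline{M}\,\Box\,\underline{N}\simeq\underline{A}$, the first step is to note that its bottom tier gives $\underline{M}(C_2)\otimes\underline{N}(C_2)\simeq\underline{A}(C_2)=\Z$ as $C_2$-modules; hence $\underline{M}(C_2)$ is an invertible $C_2$-module, so by Lemma~\ref{lemma: Invertible AbGrp}(b) it is either $\Z$ with trivial action or $\Z_-$.

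The key new step is to eliminate $\underline{M}(C_2)\simeq\Z_-$. If it held, then $\underline{N}(C_2)\simeq\Z_-$ as well, since $\Z_-\otimes\Z\simeq\Z_-\not\simeq\Z$. By axiom (1) of Definition~\ref{defn: CpMF} every restriction map lands in the $C_2$-fixed subgroup of its target, and $(\Z_-)^{C_2}=0$, so $res_{\underline{M}}=0$ and $res_{\underline{N}}=0$. Feeding this into the definition of the box product: on the summand $\underline{M}(*)\otimes\underline{N}(*)$ the restriction of $\underline{M}\,\Box\,\underline{N}$ is $r_{\underline{M}}\otimes r_{\underline{N}}=0$, while on the transfer summand $(\underline{M}(C_2)\otimes\underline{N}(C_2))_{C_2}=(\Z_-\otimes\Z_-)_{C_2}$ — which carries the trivial action, hence equals $\Z$ — the formula $res(tr(x))=\sum_{\gamma\in C_2}\gamma\cdot x$ gives $2x$. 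Thus the restriction map of $\underline{M}\,\Box\,\underline{N}$ has image inside $2\Z\subsetneq\Z$, whereas $res_{\underline{A}}$ is surjective ($r_{\underline{A}}(1)=1$); transporting $res_{\underline{A}}$ along the claimed isomorphism contradicts this. Hence $\underline{M}(C_2)\simeq\Z$ with trivial action, and symmetrically $\underline{N}(C_2)\simeq\Z$.

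With $\underline{M}(C_2)\simeq\Z$ carrying the trivial action, every ingredient of the $p$ odd argument is available verbatim: $\underline{M}(C_2)$ is torsion free with trivial action, so Lemma~\ref{lemma: Gamma Torsion Free} applies to $\underline{M}$; Proposition~\ref{prop: GeomIso} together with the lemma that $\Phi^{C_p}(\underline{M})(*)\simeq\Z$ for invertible $\underline{M}$ give $\Phi^{C_2}(\underline{M})(*)\simeq\Z$; the lemma that $\Gamma_{C_p}(\underline{M})\simeq\Z^k$ with $k\le1$ applies since $\underline{M}(C_2)\simeq\Z$; the isotropy separation sequence $0\to\Z^k\to\underline{M}(*)\to\Z\to0$ splits, so $\underline{M}(*)\simeq\Z^k\oplus\Z$; the lemma excluding $k=0$ was stated for $p$ odd, but its Frobenius-reciprocity computation uses oddness nowhere — only that both levels of $\underline{M}$ and $\underline{N}$ carry the trivial action — so it re-runs unchanged, forcing $\underline{M}(*)\simeq\Z\oplus\Z$ with one summand generated by $tr(1)$. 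Then $r(tr(1))=p$, and sending the remaining free generator to an integer $d$ identifies $\underline{M}\simeq{}_d\underline{A}$; finally Lemma~\ref{lemma: Invertible Twisted}, whose proof (via primality of $p$ and Bézout, here $1=\pm cd+2x$) is equally valid at $p=2$, forces $(d,p)=1$.

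The main obstacle is precisely the elimination of the sign-module case $\underline{M}(C_2)\simeq\Z_-$: this is the one genuinely new phenomenon at the prime $2$, arising from the extra invertible $C_2$-module, and the restriction-image argument above is the crux. The only remaining care is bookkeeping — checking that the two results originally phrased "for $p$ odd" (the exclusion of $k=0$ and, implicitly, Lemma~\ref{lemma: Invertible Twisted}) do not use oddness in any essential way — and this is routine.
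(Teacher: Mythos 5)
Your proposal is correct and follows essentially the same route as the paper: it reduces to the chain of lemmas for the odd case, isolates $\Z_-$ as the only new invertible $C_2$-module via Lemma~\ref{lemma: Invertible AbGrp}(b), and rules it out by observing that both restrictions vanish (since restrictions land in $(\Z_-)^{C_2}=0$), forcing the box product's restriction image into $2\Z$ while $res_{\underline{A}}$ is surjective. Your explicit check that the two ``$p$ odd'' lemmas use no oddness is a welcome bit of extra care that the paper leaves implicit.
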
 

We now address the case of $p=2$. Although all of the above computations hold for $p=2$ as well, there is an additional case to consider for $\underline{M}(C_p) = \Z_-$. We begin with the following lemma.

\begin{lemma}Let $\underline{M} \in \mathfrak{M}_{C_2}$ and suppose $\underline{M}(C_2) = \Z_-$. Then $res(x) = 0$ for all $x \in \underline{M}(*)$. 
\end{lemma}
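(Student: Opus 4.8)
The plan is to exploit the compatibility conditions between restriction, transfer, and the Weyl action in Definition \ref{defn: CpMF}, specialized to the situation where the lower group $\underline{M}(C_2)$ is the sign representation $\Z_-$. The key algebraic fact about $\Z_-$ is that the nontrivial element $\gamma \in C_2$ acts by $-1$, so that $\sum_{\gamma \in C_2}\gamma \cdot y = y + (\gamma \cdot y) = y - y = 0$ for every $y \in \Z_-$.

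First I would take an arbitrary $x \in \underline{M}(*)$ and set $y := res(x) \in \underline{M}(C_2) = \Z_-$. By property (1) of Definition \ref{defn: CpMF}, $y$ is fixed by the $C_2$-action: $\gamma \cdot y = y$. On the other hand, since $y$ lives in $\Z_-$, we also have $\gamma \cdot y = -y$. Combining these two equalities gives $y = -y$, i.e. $2y = 0$ in $\Z_-$. But $\Z_-$ is $\Z$ as an underlying abelian group, which is torsion free, so $2y = 0$ forces $y = 0$. Hence $res(x) = 0$.

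Since $x$ was arbitrary, $res$ is the zero map on all of $\underline{M}(*)$, as claimed. I do not anticipate any genuine obstacle here: the proof is a two-line consequence of the incompatibility between the "restrictions land in the fixed points" axiom and the fact that the only fixed point of the sign action on a torsion-free group is zero. The only point requiring the tiniest bit of care is invoking torsion-freeness of the underlying group of $\Z_-$ to pass from $2y = 0$ to $y = 0$ — without that observation one would only conclude $res(x)$ is $2$-torsion.
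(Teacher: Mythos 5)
Your argument is correct and is essentially identical to the paper's proof: both invoke the axiom $\gamma \cdot res(x) = res(x)$, combine it with $\gamma \cdot y = -y$ in $\Z_-$, and conclude $res(x) = 0$ from torsion-freeness of $\Z$. Your explicit remark about needing torsion-freeness to pass from $2y=0$ to $y=0$ is a point the paper leaves implicit, but the route is the same.
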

\begin{proof}Let $x \in \underline{M}(*)$. By the definition of a Mackey functor, $\gamma \cdot res(x) = res(x)$ for all $\gamma \in C_2$. Letting $\gamma$ be the nontrivial element of $C_2$, we have $-res(x) = res(x) \in \Z$. Then $res(x) = 0$. Since $x$ was arbitrary, the result follows.
\end{proof}

Now suppose $\underline{M}$ were invertible with inverse $\underline{N}$ and suppose $\underline{M}(C_p) = \Z_-$. Then $\Z_- \otimes \underline{N}(C_p) \simeq \Z$. Recall that the action of $C_2$ on $\Z_- \otimes \underline{N}(C_p)$ is the diagonal action. From this, it is easy to show that $\underline{N}(C_p) \simeq \Z_-$ so that $r_{\underline{N}} \equiv 0$ as well. 

For any $x \otimes y \in \underline{M}(*) \otimes \underline{N}(*)$, we have $r(x \otimes y) = r_{\underline{M}}(x) \otimes r_{\underline{N}}(y) = 0$. Then the only nonzero restrictions come from images of the transfer map. Let $k(1 \otimes 1) \in \underline{M}(C_p) \otimes \underline{N}(C_p) = \Z\langle 1 \otimes 1 \rangle $. We have 
$$r(tr(k(1 \otimes 1))) = k \, res(tr(1 \otimes 1)) = 2k(1 \otimes 1)$$
Then $im(res) \subseteq 2\Z \subset \Z$. However, $im(res_{\underline{A}}) = \Z$ so that $\underline{M} \, \Box \, \underline{N}$ cannot be isomorphic to $\underline{A}$. Then in fact, this case does not contribute any invertible Mackey functors so that Proposition \ref{prop: Invertible odds} does in fact hold for $p=2$. Combining the results of Proposition \ref{prop: Invertible odds} and Lemma \ref{lemma: Invertible Twisted}, we obtain the following classification of invertible $C_p$-Mackey functors.

\begin{thm} \label{theorem: classify} Let $\underline{M}$ be a $C_p$-Mackey functor for $p$ prime. Then $\underline{M}$ is invertible if and only if $\underline{M}$ is isomorphic to $_d\underline{A}$ for some $d \in \Z$ such that $(d,p)=1.$ $ \quad \quad \quad \blacksquare$\\
\end{thm}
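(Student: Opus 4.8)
The plan is to assemble the final classification Theorem \ref{theorem: classify} essentially as a bookkeeping combination of the results already established in this section, splitting into the two directions of the biconditional and, within the forward direction, into the cases $p$ odd and $p=2$.

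For the direction ``$\underline{M} \simeq {_d\underline{A}}$ with $(d,p)=1$ $\Rightarrow$ $\underline{M}$ invertible'': this is immediate from Lemma \ref{lemma: Invertible Twisted}, which produces a $d'$ with ${_d\underline{A}} \, \Box \, {_{d'}\underline{A}} \simeq \underline{A}$ precisely when $(d,p)=1$. Since $\underline{A}$ is the unit for $\Box$ (established in Section \ref{subsect: Burnside}), an isomorphic Mackey functor is likewise invertible. For the direction ``$\underline{M}$ invertible $\Rightarrow$ $\underline{M} \simeq {_d\underline{A}}$ with $(d,p)=1$'': first handle $p$ odd, where Proposition \ref{prop: Invertible odds} already gives exactly this statement (via the chain of lemmas showing $\underline{M}(C_p) \simeq \Z$, then $\underline{M}$ torsion free with $\underline{M}(*) \simeq \Z \oplus \Z$, then the isotropy-separation splitting identifying the restriction map with that of some $_d\underline{A}$, and finally Lemma \ref{lemma: Invertible Twisted} forcing $(d,p)=1$). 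Then treat $p=2$: invertibility forces $\underline{M}(C_2) \otimes \underline{N}(C_2) \simeq \Z$ as $C_2$-modules, so by Lemma \ref{lemma: Invertible AbGrp}(b) either $\underline{M}(C_2) \simeq \Z$ (trivial action) or $\underline{M}(C_2) \simeq \Z_-$. In the first case the odd-prime argument goes through verbatim. In the second case, the preceding lemma forces $res \equiv 0$ on both $\underline{M}$ and $\underline{N}$, whence $\mathrm{im}(res_{\underline{M}\Box\underline{N}}) \subseteq 2\Z \subsetneq \Z = \mathrm{im}(res_{\underline{A}})$, contradicting $\underline{M} \, \Box \, \underline{N} \simeq \underline{A}$; so the $\Z_-$ case contributes nothing and Proposition \ref{prop: Invertible odds} holds for $p=2$ as well.

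Concretely, the write-up is three short paragraphs: (i) cite Lemma \ref{lemma: Invertible Twisted} and unitality for the ``if'' direction; (ii) cite Proposition \ref{prop: Invertible odds} for the ``only if'' direction when $p$ is odd; (iii) reproduce the $p=2$ dichotomy from the two lemmas just above the theorem statement to extend Proposition \ref{prop: Invertible odds} to $p=2$, and conclude. No genuinely new mathematics is needed.

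The main obstacle is not any single hard computation but rather ensuring the logical plumbing is airtight: in particular, that the $p=2$, $\Z_-$ case is genuinely excluded \emph{symmetrically} (one must know $\underline{N}(C_2) \simeq \Z_-$ forces $r_{\underline{N}} \equiv 0$ too, which follows from the same lemma applied to $\underline{N}$, itself invertible), and that in the surviving $p=2$, trivial-action case every lemma invoked for $p$ odd ($\Gamma_{C_p}$ torsion free, $\Phi^{C_p}(\underline{M})(*) \simeq \Z$, $\underline{M}(*) \simeq \Z \oplus \Z$) was either proved without using oddness or can be re-derived identically. I would state explicitly that the only place oddness of $p$ was used in Section \ref{sect: Invertible MF} is in ruling out the $\Z_-$ module structure on $\underline{M}(C_p)$, so once that structure is separately excluded for $p=2$ the remaining arguments apply uniformly. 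Given that, the theorem follows by combining Proposition \ref{prop: Invertible odds} (now valid for all primes) with Lemma \ref{lemma: Invertible Twisted}.
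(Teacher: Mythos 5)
Your proposal is correct and follows the paper's own argument essentially verbatim: the paper likewise obtains the theorem by combining Lemma \ref{lemma: Invertible Twisted} (for the ``if'' direction) with Proposition \ref{prop: Invertible odds}, extended to $p=2$ by excluding the $\Z_-$ case exactly as you describe. Your explicit remark that oddness of $p$ enters only through ruling out the $\Z_-$ module structure is a worthwhile clarification of the logical plumbing, but it is not a different route.
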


\subsection*{Acknowledgments}It is a pleasure to thank my mentors, Peter May and Dylan Wilson, for introducing me to this topic and teaching me so much interesting mathematics. I greatly appreciate all the guidance and encouragement they have given over the course of the REU program. I would also like to thank May for organizing a remarkable program and for his continued dedication that makes it all possible.


\begin{thebibliography}{9}

\bibitem{ADress} Andreas Dress, Contributions to the theory of induced representations, Springer Lecture Notes in Mathematics, Vol 342, 1973, pp. 183-240.

\bibitem{Burn} William Burnside, Theory of Groups of Finite Order, 2nd ed., Cambridge, 1911; reprinted, Dover, New York, 1955. 

\bibitem{GLewis} L.G. Lewis, The $RO(G)$-graded equivariant ordinary cohomology of complex projective spaces with linear $\Z/p$ actions, Springer Lecture Notes in Mathematics, Vol 1361, 1988, pp. 53-122.

\bibitem{Mazur} Kristen Mazur, On the structure of Mackey functors and Tambara functors, 2013.

\bibitem{Shulman} Megan Shulman, Equivariant local coefficients and the $RO(G)$-graded cohomology of classifying spaces, Proquest, UMI Dissertation Publishing, Ann Arbor, 2010.

\bibitem{TW} Jacques Th\'evenaz and Peter Webb. Simple Mackey functors. \textit{In Proceedings of the Second International Group Theory Conference}, number 23, 1990,  pp. 299-319.


\end{thebibliography}
\end{document}